\newcommand{\R}{\mathbb{R}}
\newcommand{\Z}{\mathbb{Z}}
\newcommand{\N}{\mathbb{N}}
\newcommand{\ds}{\displaystyle}
\newcommand{\one}{\mathbf{1}}
\newtheorem{thmx}{Theorem}
\DeclareMathOperator{\sgn}{sgn}
\tikzset{circle/.style = {rounded corners,line width=1bp,color=#1}}%
\DeclareSymbolFont{yhlargesymbols}{OMX}{yhex}{m}{n} 
\DeclareMathAccent{\yhwidehat}{\mathord}{yhlargesymbols}{"62}
\renewcommand{\tocsection}[3]{%
  \indentlabel{\@ifnotempty{#2}{\bfseries\ignorespaces#1 #2\quad}}\bfseries#3}
\renewcommand{\tocsubsection}[3]{%
  \indentlabel{\@ifnotempty{#2}{\ignorespaces#1 #2\quad}}#3}
\def\@tocline#1#2#3#4#5#6#7{\relax
  \ifnum #1>\c@tocdepth 
  \else
    \par \addpenalty\@secpenalty\addvspace{#2}%
    \begingroup \hyphenpenalty\@M
    \@ifempty{#4}{%
      \@tempdima\csname r@tocindent\number#1\endcsname\relax
    }{%
      \@tempdima#4\relax
    }%
    \parindent\z@ \leftskip#3\relax \advance\leftskip\@tempdima\relax
    \rightskip\@pnumwidth plus1em \parfillskip-\@pnumwidth
    #5\leavevmode\hskip-\@tempdima{#6}\nobreak
    \leaders\hbox{$\m@th\mkern \@dotsep mu\hbox{.}\mkern \@dotsep mu$}\hfill
    \nobreak
    \hbox to\@pnumwidth{\@tocpagenum{\ifnum#1=1\bfseries\fi#7}}\par
    \nobreak
    \endgroup
  \fi}
\renewcommand\csname r@tocindent0\endcsname{0pt}
\def\l@subsection{\@tocline{2}{0pt}{2.5pc}{5pc}{}}
\newcommand{\Addresses}{{
  \bigskip
  \footnotesize
  Cristina Benea, \textsc{Nantes Universit\'e, Laboratoire de Math\'ematiques Jean Leray, Nantes 44322, France}\par\nopagebreak
 \textit{E-mail address}: \texttt{cristina.benea@univ-nantes.fr}\\
 
  Itamar Oliveira, \textsc{School of Mathematics, The Watson Building, University of Birmingham, Edgbaston, Birmingham,
B15 2TT, England}\par\nopagebreak
 \textit{E-mail addresses}: \texttt{oliveira.itamar.w@gmail.com}, \texttt{i.oliveira@bham.ac.uk}

}}
\theoremstyle{plain}
\newtheorem{theorem}{Theorem}[section]
\newtheorem{claim}[theorem]{Claim}
\theoremstyle{definition}
\newtheorem{remark}[theorem]{Remark}
\def\eqalign#1{\null\,\vcenter{\openup\jot\mathsurround\dimen12
  \ialign{\strut\hfil$\textstyle{##}$&$\textstyle{{}##}$\hfil
      \crcr#1\crcr}}\,}
\begin{document}

\begin{abstract} We prove the boundedness of a trilinear operator that is modulation invariant and which contains curvature information given by the presence of a complex exponential, adding to the small class of examples of such operators.
\end{abstract}

\title[The oscillatory biest operator]{The oscillatory biest operator}
\author{Cristina Benea and Itamar Oliveira}
\thanks{C.B. acknowledges financial support from CNRS grant JCJC 2023. I.O. was supported by ERC project FAnFArE no. 637510. He is currently supported by UK Research and Innovation (grant reference EP/W032880/1).}
\date{}
\maketitle


\section{Introduction}
Our work is motivated by a recent research direction, initiated in \cite{BBLV}, that seeks to understand operators that are both modulation invariant and which present some curvature features. The hybrid nature of these objects require a careful combination of time-frequency analysis tools with techniques traditionally used for dealing with curvature (stationary/non-stationary phase principle, $TT^*$ methods, decaying-type attributes), and in this paper we introduce an example to the yet small class of known bounded operators that have this profile.

\subsection{Modulation invariance} A representative example of a modulation-invariant operator is the bilinear Hilbert transform, defined by
\begin{equation}
BHT(f, g)(x)=\mathrm{p.v.} \int_{\R} f(x-t) g(x+t) \frac{\mathrm{d} t}{t}.
\end{equation}
The following result was proved by Lacey and Thiele \cite{LT1}, \cite{LT2}:
\begin{thmx}
\label{thm:BHT}
The bilinear Hilbert transform satisfies the following estimates:
\[
\|  BHT(f,g)  \|_q \lesssim \|f\|_{q_1} \, \|g\|_{q_2}
\]
for any $1 < q_1, q_2 \leq \infty$, $2/3<q<\infty$ with $\frac{1}{q}=\frac{1}{q_1}+\frac{1}{q_2}$.
\end{thmx}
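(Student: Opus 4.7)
The plan is to follow the time-frequency approach of Lacey and Thiele. First I would dualize, writing the trilinear form
\[
\Lambda(f,g,h) = \int_{\R} BHT(f,g)(x)\, h(x)\, \mathrm{d}x
\]
and aim for restricted weak-type estimates
\[
|\Lambda(f_1,f_2,f_3)| \lesssim |F_1|^{\theta_1}|F_2|^{\theta_2}|F_3|^{\theta_3}
\]
for $f_i = \mathbf{1}_{F_i}$ (after passing to a major subset of the exceptional index), for a suitable open set of exponents $(\theta_1,\theta_2,\theta_3)$ with $\sum \theta_i = 1$. The full range of estimates then follows from multilinear interpolation (Marcinkiewicz/real interpolation for multilinear operators). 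The key obstruction to using standard Calderón--Zygmund or paraproduct methods is the modulation symmetry $BHT(M_\xi f, M_\xi g) = M_{2\xi} BHT(f,g)$, which forces an approach in which the frequency variable is treated on the same footing as the spatial variable.

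Next comes the discretization. I would write the symbol $\mathrm{sgn}(\xi_1 - \xi_2)$ of $BHT$ as a sum of smooth pieces adapted to pairs of frequency intervals $(\omega_1,\omega_2)$ of equal length, separated and non-overlapping. Pairing with the third dual frequency gives a decomposition of $\Lambda$ as an average of discrete model sums
\[
\Lambda_{\mathbf{P}}(f_1,f_2,f_3) = \sum_{P \in \mathbf{P}} \frac{1}{|I_P|^{1/2}} \prod_{i=1}^{3} \langle f_i, \varphi_{P_i}\rangle,
\]
where $\mathbf{P}$ is a collection of tritiles $P = (P_1,P_2,P_3)$, each $P_i$ being a time-frequency tile $I_P \times \omega_{P_i}$ of area one, and $\varphi_{P_i}$ is an $L^2$-normalized wave packet adapted to $P_i$.

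The core of the proof is then organizing $\mathbf{P}$ into trees and carrying out the selection algorithm. For each $i \in \{1,2,3\}$, a tree $T$ of type $i$ is a subcollection sharing a common top frequency in the $i$-th coordinate; for such a tree one proves a tree estimate
\[
\Big| \sum_{P \in T} \frac{1}{|I_P|^{1/2}} \prod_{j=1}^{3} \langle f_j,\varphi_{P_j}\rangle \Big| \lesssim |I_T| \prod_{j=1}^3 \mathrm{size}_j(T;f_j),
\]
using Bessel-type inequalities and Carleson-type embeddings in the two non-lacunary directions and orthogonality in the $i$-th direction. I would then iteratively extract, from $\mathbf{P}$, maximal subcollections (forests) on which the sizes in each coordinate are controlled by dyadic thresholds and whose total top-intervals are bounded by the associated energy divided by the square of the size, which is the content of the size/energy lemmas. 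Summing the tree estimate over forests and then geometrically in the dyadic thresholds produces the desired bound in terms of $\|f_i\|_{p_i}$ with exponents in the claimed range, including the regime $q < 1$.

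The main obstacle is the joint use of two orthogonalities and one $L^\infty$-type estimate: the tritile geometry is what allows two frequency directions to be lacunary simultaneously (in contrast to Carleson's theorem), but this forces the tree/forest decomposition to be run in parallel on three coordinates, with the selection at each stage respecting a shared time localization. The clever point, and hardest bookkeeping step, is the stopping-time argument that shows the top intervals of trees in a forest of size $\sigma$ have total measure $\lesssim \mathrm{energy}/\sigma^2$, which in turn requires a $TT^*$-type almost-orthogonality estimate between different trees of the same type. Once this ingredient is in place, the remaining work is algebraic: optimizing the dyadic parameters and interpolating the restricted weak-type bounds to reach all $(q_1,q_2,q)$ with $1 < q_1,q_2 \le \infty$, $2/3 < q < \infty$.
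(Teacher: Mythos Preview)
The paper does not give its own proof of this statement: Theorem~\ref{thm:BHT} is stated as a known result and attributed to Lacey and Thiele \cite{LT1}, \cite{LT2}, and is then used as a black box throughout the paper. Your outline is a faithful sketch of the Lacey--Thiele argument (dualization to a trilinear form, reduction to a model sum over tritiles, tree estimates, size/energy selection, and multilinear interpolation of restricted weak-type bounds), so there is nothing to compare against in the paper itself.
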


For $a \in \R$, we denote by $M_a$ the modulation action on functions:
\[
M_af(x):= e^{2 \pi i a x} f(x).
\]
Then we can see that for any $a \in \R$, 
\[
| BHT(M_a f, M_a g)(x)|= | BHT(f, g)(x)|.
\]
One can equally see this invariance in frequency (as invariance under phase translations) since the bilinear Hilbert transform is a bilinear operator with frequency symbol $\sgn(\xi_1-\xi_2)$:
\begin{equation}
BHT(f, g)(x)= \int_{\R^2} \hat f(\xi_1) \hat g(\xi_2) \sgn(\xi_1-\xi_2) e^{2 \pi i x(\xi_1+\xi_2)} \mathrm{d} \xi_1 \mathrm{d} \xi_2.
\end{equation}

This modulation invariance makes the time-frequency analysis machinery well-suited to study $BHT$. The methods of the proof of Theorem \ref{thm:BHT} are closely related to those used to show the almost everywhere convergence of Fourier series (see \cite{Carleson}, \cite{F}), and thus require a simultaneous phase and space decomposition of the functions involved, together with a phase-space level-set analysis achieved through various stopping times.

Additionally, it can be useful to think of $BHT$ as a bilinear operator given by a frequency symbol which is singular along the line 
\begin{equation}
\Gamma_1:=\{ (\xi_1, \xi_2) : \xi_1=\xi_2   \}.
\end{equation}
Other types of multilinear operators associated to symbols singular along higher-dimensional frequency subspaces have been studied in \cite{MTT3}. The setup described in that paper however fails to cover a particularly interesting case - that of the trilinear Hilbert transform 
\begin{equation*}
THT(f, g, h)(x)=\mathrm{p.v.} \int_{\R} f(x-t) g(x+t) h(x+2t) \frac{\mathrm{d} t}{t},
\end{equation*}  
which exhibits both linear and quadratic modulation invariances. More precisely, if for $a \in \R$ and $M_{2,a}$ denotes the quadratic modulation action (that is $\ds (M_{2,a}f)(x):= e^{2\pi i a x^2} f(x)$), then one can easily check that $\ds | THT(M_{2,a} f, M_{2, 3a} g, M_{2, -a}h)(x)|= | THT(f, g, h)(x)|$ for all $a \in \R$. Its frequency symbol is given by $\ds \sgn(\xi_1-\xi_2-2 \xi_3)$, so it is singular along a non-degenerate\footnote{Notice that the plane $\ds \{ (\xi_1, \xi_2, \xi_3) : \xi_1-\xi_2-2 \xi_3 =0 \}$ can be represented as a graph over any two of the variables $\xi_1, \xi_2$ or $\xi_3$.} plane.

On the other hand, the \textit{biest operator} (which we will denote by $T$) is a trilinear operator given in frequency by  
\begin{equation}
\label{eq:def:biest}
T(f, g, h)(x) = \iiint_{\xi_1<\xi_2<\xi_3} \hat f(\xi_1) \hat g(\xi_2) \hat h(\xi_3) e^{2 \pi i x(\xi_1+\xi_2+\xi_3)} \mathrm{d}\xi_1 \mathrm{d}\xi_2 \mathrm{d}\xi_3.
\end{equation}
It was introduced in \cite{MTT2}, where a large set of $L^p$ estimates was obtained; in particular, it was shown that $T$ maps $L^{q_1} \times L^{q_2} \times L^{q_3}$ into $L^q$ for any $1<q_1, q_2, q_3 \leq \infty, 1 \leq q <\infty$, and also that it maps $L^{2} \times L^{2} \times L^{2}$ into $L^{\frac{2}{3}}$. The study of this operator was motivated by WKB expansions of eigenvalues of one-dimensional Schr\"odinger operators, in relation to \cite{ck}.

The frequency symbol of $T$ can be equally seen as $\ds \one_{\xi_1<\xi_2} \cdot \one_{\xi_2<\xi_3}$, so it is singular along the region $\ds \{ \xi_1=\xi_2 \} \cup \{\xi_2=\xi_3  \}$, which is the union of two degenerate planes in $\R^3$. As shown in \cite{MTT2}, the biest can be interpreted as a local composition of two $BHT$-type operators (one acting in $f_1$ and $f_2$, the other one in $f_2$ and $f_3$), plus a term of lower complexity. Concerning the modulation invariance, one can directly check that for any $a \in \R$,
\begin{equation}
\label{eq:mod-inv:biest}
| T(M_{a} f, M_{a} g, M_{a}h)(x)|= |T(f, g, h)(x)|.
\end{equation} 
More general cases of such trilinear operators, associated to products of symbols singular along the planes $\{ \xi_1=\xi_2 \}$ and $\{\xi_2=\xi_3 \}$ respectively, were studied in \cite{JJ}, where they were shown to satisfy the same $L^p$ estimates as the biest operator. 

\subsection{Introducing curvature features} The operator studied in the present paper, which we call the \textit{oscillatory biest operator}, is very similar to the biest operator $T$ from \eqref{eq:def:biest} (when written in spatial variables), except that the kernel now carries an oscillatory factor. More concretely, for $b>0$, the \textit{oscillatory biest} is defined by
\begin{equation}
\label{eq:def:osc:biest}
B(f_{1},f_{2},f_{3})(x)=\mathrm{p.v.}\int_{\mathbb{R}^{2}}f_{1}(x-t)f_{2}(x+t+s)f_{3}(x-s)e^{2\pi is^{b}}\frac{\mathrm{d}t}{t}\frac{\mathrm{d}s}{s}.
\end{equation}
If $b$ is not an integer, then $s^b$ should be interpreted as $\ds \sgn(s) |s|^b$. For small values of $s$ the oscillatory factor has very little effect, so in that regime $B$ will behave similarly to the classical biest operator $T$. But for large values of $s$, as we will see, the two operators will behave quite differently. Although the operator $B$ above clearly depends on the value of the parameter $b$, we suppress this dependency from the notation.

Our main theorem is
\begin{theorem}\label{mainthm}
Let $b \in (0, \infty) \setminus \{2\}$. Then the oscillatory biest operator $B$ associated to this parameter $b$ is a bounded operator from $L^{p_1} \times L^{p_2} \times L^{p_3}$ into $L^p$ for any $1<p_1, p_2, p_3<\infty$, $1 \leq p < \infty$ with $\ds \frac{1}{p_1}+\frac{1}{p_2}+\frac{1}{p_3}=\frac{1}{p}$.
\end{theorem}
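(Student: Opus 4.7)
The strategy is to pass to the frequency side. A direct computation shows that the multiplier symbol of $B$ factorizes as
\begin{equation*}
m(\xi_1,\xi_2,\xi_3)=\sgn(\xi_2-\xi_1)\cdot\Phi(\xi_2-\xi_3),\qquad \Phi(\eta)=\mathrm{p.v.}\int_{\R}e^{2\pi i(s\eta+s^{b})}\,\frac{ds}{s},
\end{equation*}
so $B$ carries a standard $BHT$-type singularity along $\xi_1=\xi_2$ coupled with a one-dimensional oscillatory multiplier $\Phi$ acting on $\xi_2-\xi_3$. The oscillatory factor $e^{2\pi i s^{b}}$ should be what regularizes, via stationary phase, what would otherwise be the second $BHT$ singularity along $\xi_2=\xi_3$ appearing in the classical biest. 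I would split $\Phi=\Phi_{\mathrm{near}}+\Phi_{\mathrm{far}}$ corresponding to the regions $|s|\le 1$ and $|s|>1$.

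The near piece handles the regime where the oscillation is essentially inactive. Using $|e^{2\pi i s^{b}}-1|\lesssim |s|^{b}$ and writing $e^{2\pi i s^{b}}=1+(e^{2\pi i s^{b}}-1)$, the $1$-contribution produces the classical biest cut off to $|s|\le 1$, bounded by Muscalu--Tao--Thiele~\cite{MTT2}; the $O(|s|^{b})/s$ remainder, being absolutely integrable for $b>0$, is handled by direct trilinear H\"older-type estimates. The far piece is decomposed dyadically, $\mathbf{1}_{|s|>1}/s=\sum_{k\ge 1}\rho_k(s)$ with $\rho_k$ supported in $|s|\sim 2^{k}$, giving corresponding operators $B_k$. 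Stationary-phase analysis of $\int e^{2\pi i(s\eta+s^{b})}\rho_k(s)\,ds$ shows that critical points exist only when $|\eta|\sim 2^{k(b-1)}$, that the second derivative at such a point is $\sim 2^{k(b-2)}$, and that outside this "shifted diagonal" integration by parts yields arbitrary polynomial decay. For $b\ne 2$ this second derivative is nondegenerate at every scale, producing $L^{\infty}$ control $|\Phi_k|\lesssim 2^{-\delta k}$ with some $\delta=\delta(b)>0$, concentrated in a thin slab in $\eta$ centered at $c_k\sim 2^{k(b-1)}$ of width $\sim 2^{-k(b-2)/2}$, well separated from $\eta=0$.

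Within such a slab the second $BHT$ singularity is absent, so factoring $\Phi_k(\eta)=e^{2\pi ic_k\eta}\widetilde{\Phi}_k(\eta)$ with $\widetilde{\Phi}_k$ a smooth bump, the modulation $e^{2\pi ic_k(\xi_3-\xi_2)}$ can be absorbed by a translation of $f_3$ relative to $f_2$, and $B_k$ becomes the composition of a $BHT$ in $(f_1,f_2)$ with a smooth (H\"ormander--Mihlin-type) bilinear multiplier in $(f_2,f_3)$. Combining Theorem~\ref{thm:BHT} with standard multiplier bounds should give $L^{p_1}\times L^{p_2}\times L^{p_3}\to L^{p}$ estimates for each $B_k$ with an explicit $2^{-\delta k}$ decay, summable in $k$ to finish. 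The main obstacle is to push this scheme through while preserving the $k$-decay: the tile and wave-packet decomposition underlying the $BHT$ proof must be shown to interact benignly with the large modulation $e^{2\pi ic_k(\xi_3-\xi_2)}$, and the narrow band of width $\sim 2^{-k(b-2)/2}$ must not degrade the Lebesgue exponents. The exclusion of $b=2$ enters precisely here: a direct calculation shows that at $b=2$ the critical value of $s\eta+s^{b}$ equals $-\eta^{2}/4$, independent of the scale $k$, so stationary phase produces a fixed quadratic modulation with no cancellation across scales; this quadratic modulation matches the additional symmetry present for the trilinear Hilbert transform, whose boundedness is a well-known open problem.
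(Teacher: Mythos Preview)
Your overall architecture matches the paper's: split into a non-oscillatory regime ($|s|\lesssim 1$) handled by biest-type results, and an oscillatory regime decomposed dyadically into pieces $B_k$ for which one seeks exponential decay in $k$. The gap is in how you extract that decay.

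Your central claim---that after absorbing a modulation, ``$B_k$ becomes the composition of a $BHT$ in $(f_1,f_2)$ with a smooth bilinear multiplier in $(f_2,f_3)$''---is false. The symbol of $B_k$ is $\sgn(\xi_1-\xi_2)\cdot\Phi_k(\xi_2-\xi_3)$, and the shared variable $\xi_2$ prevents any such factorization: a genuine composition $BHT(f_1,S(f_2,f_3))$ has symbol depending on $\xi_1$ and $\xi_2+\xi_3$, not on $\xi_2$ alone, and similarly for the other ordering. Moreover, even granting the stationary-phase bound $|\Phi_k|\lesssim 2^{-kb/2}$, you cannot simply pull this factor out and invoke a uniform bound on the remaining operator: the normalized symbol $2^{kb/2}\Phi_k$ carries the nonlinear phase $c_b\,\eta^{b/(b-1)}$ and fails the Mikhlin condition $|\partial_\eta^\alpha m(\eta)|\lesssim|\eta|^{-\alpha}$ uniformly in $k$ (one derivative already costs a factor $2^k$), so Jung-type biest bounds do not apply uniformly. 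Symbol-level decay does not transfer to operator-level decay here.

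The paper confronts exactly this entanglement. It further decomposes $\sgn(\xi_1-\xi_2)=\sum_\ell\psi_\ell(\xi_1-\xi_2)$ and splits $\ell$ into three regimes relative to the stationary-phase scale $2^{k(b-1)}$. In the extreme regimes it Taylor-expands one factor around the output frequency of the other (e.g.\ replacing $\mathfrak m_k^b(\xi_3-\xi_2)$ by $\mathfrak m_k^b(\xi_3-\tfrac{\xi_1+\xi_2}{2})$), which \emph{does} produce a genuine composition such as $\tilde T_k(f_3,BHT(f_1,f_2))$ or $BHT(f_1,T_k(f_2,f_3))$, plus controllable remainders. Crucially, the $2^{-\delta bk}$ decay then comes not from stationary phase on the symbol but from the operator-level result of \cite{BBLV} (Theorem~\ref{thm:conseq:BCa} here) for the oscillatory bilinear piece $T_k$; this is also where the restriction $b\neq 2$ actually enters. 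The intermediate regime, where the two scales are comparable, is handled by a separate Rubio de Francia/vector-valued argument, and everything is then interpolated against a crude no-decay bound for $B_k$ obtained by Minkowski.

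Two smaller points on the near piece: the ``$1$-contribution'' does not give the literal biest symbol $\mathbf 1_{\xi_1<\xi_2<\xi_3}$ but rather $\sgn(\xi_1-\xi_2)$ times a one-variable Mikhlin symbol in $\xi_2-\xi_3$, so you need Jung's theorem \cite{JJ} rather than \cite{MTT2} directly; and your $O(|s|^b)/s$ remainder, while indeed integrable in $s$, still carries the singular $dt/t$ integral, so its treatment requires the BHT bound (via Minkowski in $s$), not merely ``direct trilinear H\"older''.
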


Our primary motivation was that of providing new examples of operators which are both modulation invariant (the operator $B$ satisfies the same modulation invariance \eqref{eq:mod-inv:biest} as the biest) and which present curvature features. As mentioned before, this falls within the program announced in \cite{BBLV}, and its faraway goal is that of understanding how to combine tools used for studying modulation invariant operators with those used customarily for dealing with curvature/oscillatory integrals.

Up to the present time, there are only two known classes of multilinear operators that match the profile described above. In reversed chronological order, the first one is the class of $BC^a$ operators from \cite{BBLV}, defined by
\begin{equation}
\label{eq:def:BCa}
BC^a(f, g)(x):= \sup_{\lambda \in \R} \Big| \mathrm{p.v.} \int_{\R} f(x-t) g(x+t) e^{2 \pi i \lambda t^{a}} \frac{\mathrm{d}t}{t}\Big|,
\end{equation} 
where $a$ is a positive, real parameter (as above, $t^a$ should be interpreted as $\ds \sgn(t) |t|^a$ in the case when $a$ in not an integer). In \cite{BBLV} it was proved that for $a \in (0, \infty) \setminus \{ 1,2\}$, the operator $BC^a$ satisfies the same type of $L^p$ estimates as the bilinear Hilbert transform from Theorem \ref{thm:BHT}. The methods of the proof do not apply to the cases $a=1$ and $a=2$, and on a related note it should be mentioned that $BC^1$ and $BC^2$ also exhibit quadratic modulation invariance.
 
The other example is the one from \cite{cltt}, which is a bilinear Hilbert transform carrying an oscillatory factor given by the complex exponential associated to a real-valued polynomial. More precisely, if $P(x,t)$ is a real-valued polynomial, define
\begin{equation*}
BHT_P(f,g)(x):= \mathrm{p.v.} \int_{\R} f(x-t) g(x+t) e^{2 \pi i P(x,t)} \frac{\mathrm{d}t}{t}.
\end{equation*}
It was proven in \cite{cltt} that $BHT_{P}$ satisfies the same $L^p$ estimates as the bilinear Hilbert transform, uniformly with respect to the degree of the polynomial $P$. The monomials of degree less than or equal to 2 can be absorbed by the functions (in the spirit of identity \eqref{eq:osc:biest:deg1} below), so that the study of $BHT_P$ reduces to polynomials of degree $d \geq 3$, which do not contain monomials of degree less than or equal to 2.

Although there is a clear overlap between the $BC^a$ operators of \cite{BBLV} and the $BHT_P$ operators of \cite{cltt}, all we can say so far is that they offer different perspectives on multilinear operators carrying curvature information and which are modulation invariant. In fact, in \cite{cltt}, the modulation invariance is secondary in the analysis and the methods of the proof are related to the concept of $\sigma$-uniformity introduced in \cite{G}. On the other hand \cite{BBLV} combines time-frequency techniques (as in \cite{LT1}, \cite{LT2}, \cite{F}, \cite{MS2}) with the \emph{$LGC$ methodology} introduced in \cite{Lie1}.
It should also be noted that the main difficulty in the treatment of $BC^a$ operators comes from the fact that their linearized versions
\[
BC^a(f,g)(x)= \mathrm{p.v.} \int_{\R} f(x-t) g(x+t) e^{2 \pi i \lambda(x)t^{a}} \frac{\mathrm{d}t}{t}
\]
depend on a function $\lambda(x)$ which is only measurable.

Our methods of the proof are closer in spirit to those of \cite{BBLV}, although the difficulties here are of a different nature: our main task is to understand how to treat a trilinear operator which combines a bilinear Hilbert transform and an oscillatory bilinear Hilbert transform operator.

As a final remark, we should note that when $b=1$, our oscillatory biest operator reduces straightaway to the study of the biest operator. Indeed, for $b=1$,
\[
B(f_{1},f_{2},f_{3})(x)=\mathrm{p.v.}\int_{\mathbb{R}^{2}}f_{1}(x-t)f_{2}(x+t+s)f_{3}(x-s)e^{2\pi is}\frac{\mathrm{d}t}{t}\frac{\mathrm{d}s}{s}.
\]
Then we simply realize that 
\begin{equation}
\label{eq:osc:biest:deg1}
B(f_{1},f_{2},f_{3})(x)=e^{2\pi i (3a+1) x}  T(M_a f_1, M_a f_2, M_{a+1} f_3)(x),
\end{equation}
and since $T$ is bounded, so is $B$, within the same range.

Finally, we write $A \lesssim B$ so say that there exists a constant $C>0$ independent of $A$ and $B$, so that $A \leq C \cdot B$. And we write $A \sim B$ if $A \lesssim B$ and $B \lesssim A$, i.e. if the two expressions are comparable. If $A$ is much smaller than $B$, that is, if there exists a very large constant $C>10^2$ such that $A \leq B/C $, then we write $A \ll B$.

\subsection{Organization} This paper is organized as follows: in Section \ref{initialred} we split the oscillatory biest $B$ into many components according to certain space and frequency localizations. In Section \ref{sec:nonosc} we treat the non-oscillatory component of $B$, whereas in Sections \ref{sec:B_k:no:decay}, \ref{sec:B_k^1}, \ref{sec:B_k^2} and \ref{sec:B_k^3} we bound the oscillatory ones. In Section \ref{sec:finalint} we put all the pieces together to prove Theorem \ref{mainthm}.

\section{Initial reductions}\label{initialred}
Already the biest operator, when regarded in frequency, is bringing together two bilinear Hilbert transform operators, and part of the task in \cite{MTT2} was to understand how to decouple this information. Similarly, we need to understand how to decouple the bilinear Hilbert transform and the oscillatory bilinear Hilbert transform information.  For this we need to carry on several decompositions.

\subsection{Space localization}
\label{sec:initial:red:space}
To capture the effect of the monomial oscillation $e^{2\pi is^{b}}$, we begin by localizing $s$ dyadically and by writing the kernel $\frac{1}{s}$ as
\begin{equation}
\label{eq:def:rho}
\frac{1}{s}=\sum_{k \in \Z} 2^{-k}\rho(2^{-k}s),
\end{equation}
where $\rho$ is a smooth odd function supported in $\{ s: \frac{1}{2} \leq |s| \leq 2 \}$. Our trilinear operator $B(f_{1},f_{2},f_{3})$ can therefore be expressed as follows:
\begin{equation}
\label{eq:def:split:scale}
\sum_{k \in \Z} \iint_{\mathbb{R}^{2}}f_{1}(x-t)f_{2}(x+t+s)f_{3}(x-s)e^{2\pi is^{b}} 2^{-k}\rho(2^{-k}s)\frac{\mathrm{d}t}{t} \mathrm{d}s := \sum_{k \in \Z} B_{k}(f_{1},f_{2},f_{3})(x).
\end{equation}

Using Fourier inversion, this becomes
\begin{align}
\label{eq:def:op:freq}
\sum_{k \in \Z} \iiint_{\mathbb{R}^{3}}& \widehat{f_{1}}(\xi_1)\widehat{f_{2}}(\xi_2)\widehat{f_{3}}(\xi_3) \sgn(\xi_1-\xi_2) \big( \int_{\R}e^{-2 \pi is (\xi_3-\xi_2)} e^{2\pi is^{b}} 2^{-k}\rho(2^{-k}s) \mathrm{d}s \big)  e^{2\pi ix(\xi_1+\xi_2+\xi_3)} \mathrm{d}\xi_1\mathrm{d}\xi_2\mathrm{d}\xi_3.
\end{align}

For each $k \in \Z$, we proceed by applying the stationary/non-stationary phase principles to study the effect of the oscillatory integral
\[
  \int_{\R}e^{-2 \pi is (\xi_3-\xi_2)} e^{2\pi is^{b}} 2^{-k}\rho(2^{-k}s) \mathrm{d}s = \int_{\R}e^{-2 \pi i 2^k s (\xi_3-\xi_2)} e^{2\pi i 2^{kb}s^{b}}\rho(s) \mathrm{d}s. 
\]

Henceforth we denote
\begin{equation}
\label{eq:def:symb:os}
\mathfrak{m}^{b}_{k}(\zeta):= \int_{\R}e^{-2 \pi i 2^k s \zeta} e^{2\pi i 2^{kb}s^{b}}\rho(s) \mathrm{d}s
\end{equation}
and in the next section we analyze the properties of this multiplier.

\subsection{Non-oscillatory and oscillatory components} For large values of $k$, two scenarios are possible depending on whether the phase $\ds \varphi(s):= 2^{kb}s^{b}-2^k s \,\zeta$ has a critical point or not in the support of $\rho$. This boils down to applying the stationary phase or the non-stationary phase principle. If we are in the first situation, the stationary phase approximation indicates that the integral $\mathfrak{m}_k^b(\zeta)$ behaves asymptotically like
\begin{equation}
\label{eq:asympt:1}
\mathfrak{m}^{b}_{k}(\zeta) \sim \frac{1}{2^{bk/2}}e^{2\pi ic_{b}\zeta^{b/(b-1)}}\psi\left(\frac{\zeta}{2^{k(b-1)}}\right). 
\end{equation}
In the second situation, multiple integration by parts allows us to obtain more decay in $2^{-k}$. Observe that the situation in which a critical point exists in the support of $\rho$ corresponds to the case where $\ds|\zeta|\sim 2^{k(b-1)}$.

Guided by this principle, later on we will proceed with our analysis having in mind that the main contribution corresponds precisely to the situation when $|\xi_3-\xi_2|\sim 2^{k(b-1)}$.

On the other hand, for negative values of $k$, we cannot expect to have any decay since in this case the effect of the oscillation is inconsequential. Thus we will be splitting our operator into two parts:
\begin{equation}
\label{eq:def:low:high:osc}
\begin{aligned}
B(f_{1},f_{2},f_{3})(x)& =\sum_{k\leq 0}B_{k}(f_{1},f_{2},f_{3})(x) + \sum_{k > 0}B_{k}(f_{1},f_{2},f_{3})(x) \\
&:= B_0(f_{1},f_{2},f_{3})(x) + B_1(f_{1},f_{2},f_{3})(x).
\end{aligned}
\end{equation}
 
The term $B_0(f_{1},f_{2},f_{3})(x)$ is the \textit{non-oscillatory component}. It will be treated in Section \ref{sec:nonosc}, where we show that its study can be reduced to objects very close to the classical biest operator introduced in \cite{MTT2}. More precisely, we will show that $B_0(f_{1},f_{2},f_{3})(x)$ falls under the scope of the operators studied in \cite{JJ}.

The term $B_1(f_{1},f_{2},f_{3})(x)$ is the \textit{oscillatory component}, and we expect the oscillation $e^{2\pi is^{b}}$ to have nontrivial effect on it. Indeed, for all $k \geq 1$ we will study each $\ds B_k(f_{1},f_{2},f_{3})(x)$ separately (as defined above in \eqref{eq:def:split:scale}) and prove their boundedness with exponential decay in $k$ - this captures the impact of the oscillation. More precisely, we will prove the following result about the building blocks $\ds B_k(f_{1},f_{2},f_{3})(x)$ of the oscillatory component: 
\begin{theorem}
\label{thm:intermediary:osc:k:fixed}
For any exponents $p, p_1, p_2, p_3$ such that $1<p_1, p_2, p_3 < \infty$, $1 \leq p < \infty$  and $\ds \frac{1}{p_1}+\frac{1}{p_2}+\frac{1}{p_3}=\frac{1}{p}$, there exists $\delta_1=\delta_1(p_1,p_2, p_3) >0$ such that for all $f_1 \in L^{p_1}(\R), f_2 \in L^{p_2}(\R),  f_3 \in L^{p_3}(\R)$, and for all $k \in \N^*$, we have
\begin{align*}
\|B_k(f_{1},f_{2},f_{3})\|_p \lesssim 2^{- \delta_1 b k } \| f_1 \|_{p_1} \, \| f_2\|_{p_2} \, \| f_3\|_{p_3}.
\end{align*}
\end{theorem}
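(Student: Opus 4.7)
The plan is to decompose the multiplier $\mathfrak{m}^b_k(\xi_3-\xi_2)$ from \eqref{eq:def:symb:os} according to the stationary/non-stationary phase principle and to treat each resulting contribution to $B_k$ separately, extracting exponential decay in $k$. Writing $\varphi(s) = 2^{kb}s^b - 2^k s \zeta$, the critical point equation $\varphi'(s)=0$ has a solution in the support of $\rho$ precisely when $|\zeta| \sim 2^{k(b-1)}$. A smooth partition of unity then splits $\mathfrak{m}^b_k = \mathfrak{m}^{b,\mathrm{st}}_k + \mathfrak{m}^{b,\mathrm{ns}}_k$ and correspondingly $B_k = B_k^{\mathrm{st}} + B_k^{\mathrm{ns}}$.

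First I would handle $B_k^{\mathrm{ns}}$: iterated integration by parts in $s$ yields $|\partial_\zeta^N \mathfrak{m}^{b,\mathrm{ns}}_k(\zeta)| \lesssim_{N,M} 2^{-Mk}(1+|\zeta|)^{-N}$ for all $M,N\geq 0$. Together with a dyadic decomposition of $\sgn(\xi_1-\xi_2)$, this expresses $B_k^{\mathrm{ns}}$ as a rapidly convergent sum of trilinear operators falling under the biest-type framework of \cite{JJ}, with an overall prefactor $2^{-Mk}$ for any chosen $M$. This immediately gives the required estimate for $B_k^{\mathrm{ns}}$ on the full claimed $L^p$ range.

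Next, for $B_k^{\mathrm{st}}$, stationary phase produces the asymptotic
\[
\mathfrak{m}^{b,\mathrm{st}}_k(\zeta) = 2^{-bk/2} e^{2\pi i c_b \zeta^{b/(b-1)}} \psi\bigl(2^{-k(b-1)}\zeta\bigr) + E_k(\zeta),
\]
where $\psi$ is smooth and supported in a dyadic annulus and $E_k$ carries an extra $2^{-k}$ gain which allows it to be absorbed into $B_k^{\mathrm{ns}}$. The prefactor $2^{-bk/2}$ carries most of the decay; the remaining task is to show that the trilinear operator $\widetilde{B}_k$ with symbol $\sgn(\xi_1-\xi_2)\, e^{2\pi i c_b (\xi_3-\xi_2)^{b/(b-1)}} \psi(2^{-k(b-1)}(\xi_3-\xi_2))$ is bounded with norm at most $2^{(1/2-\delta_1)bk}$ for some $\delta_1 > 0$. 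I would approach this via the $LGC$ methodology of \cite{Lie1}. On the annulus $|\xi_3-\xi_2|\sim 2^{k(b-1)}$ the second derivative of the phase $c_b \zeta^{b/(b-1)}$ is $\sim 2^{k(2-b)}$, so (for $b\neq 2$) one can partition the annulus into $\sim 2^{kb/2}$ subintervals of length $\sim 2^{k(b-2)/2}$ on which the phase is essentially linear. On each subinterval the oscillatory factor collapses to a pure linear character times a bounded phase, corresponding in physical space to a fixed spatial translation acting on $f_2$ and $f_3$; the resulting contribution is a frequency-localized biest-type operator bounded with norm $O(1)$ by the methods of \cite{MTT2, JJ}.

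The principal obstacle is the almost-orthogonal reassembly of the $\sim 2^{kb/2}$ subinterval contributions, since a triangle-inequality bound would cost exactly $2^{kb/2}$ and cancel the stationary-phase gain. To obtain the additional factor $2^{-\delta_1 bk}$ I would exploit that the wave-packet expansions of $f_2$ and $f_3$ adapted to neighbouring subintervals have essentially disjoint Fourier supports in $\xi_3-\xi_2$ on scale $2^{k(b-2)/2}$, giving a Bessel-type orthogonality; combining this with a $TT^*$ argument that uses the quadratic non-degeneracy of the phase available for $b\neq 2$ should produce the required saving. Summing the contributions from $B_k^{\mathrm{st}}$ and $B_k^{\mathrm{ns}}$ then yields Theorem \ref{thm:intermediary:osc:k:fixed}; the degenerate case $b=2$ is excluded precisely because the linearization scale collapses there, mirroring the exclusion of $BC^{2}$ in \cite{BBLV}.
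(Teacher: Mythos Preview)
Your approach diverges substantially from the paper's, and the stationary-phase part contains a genuine gap.

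The paper never attempts a direct stationary-phase expansion of $\mathfrak{m}^b_k$. Instead it decomposes $\sgn(\xi_1-\xi_2)=\sum_\ell \psi_\ell(\xi_1-\xi_2)$ and splits $B_k$ into three pieces according to whether the BHT scale $2^\ell$ is much smaller than, comparable to, or much larger than $2^{k(b-1)}$. In the extreme regimes a first-order Taylor expansion (of $\mathfrak{m}^b_k(\xi_3-\xi_2)$ around $\xi_3-\tfrac{\xi_1+\xi_2}{2}$ when $\ell$ is small, and of $\psi_\ell(\xi_1-\xi_2)$ around $\xi_1-\tfrac{\xi_2+\xi_3}{2}$ when $\ell$ is large) decouples the two symbols, reducing $B_k$ to genuine compositions $\tilde T_k\bigl(f_3,BHT_{\mathbb L_1}(f_1,f_2)\bigr)$ and $BHT_{\mathbb L_3}\bigl(f_1,T_k(f_2,f_3)\bigr)$, plus remainders with built-in geometric decay. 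The exponential decay in $k$ then comes entirely from Theorem~B of \cite{BBLV} applied to $T_k$ as a \emph{black box}. The diagonal regime $\ell\in\mathbb L_2$ contains only $O(k)$ scales and is handled by Cauchy--Schwarz, Rubio de Francia, and vector-valued $T_k$ bounds. Finally, since these decay estimates only hold in a restricted range (e.g.\ $p_1,p_2\geq 2$ for $B_k^{(2)}$), the paper first proves a \emph{no-decay} bound for $B_k$ in the full Banach range (via Minkowski: write $B_k$ as $\int \rho(s)e^{2\pi i 2^{kb}s^b}\,BHT(f_1,f_2(\cdot+2^ks))\cdot f_3(\cdot-2^ks)\,ds$) and interpolates.

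Your stationary piece $B_k^{\mathrm{st}}$ is where the gap lies. After extracting $2^{-bk/2}$ you are left with a trilinear operator whose symbol is $\sgn(\xi_1-\xi_2)$ times an oscillatory factor supported on $|\xi_3-\xi_2|\sim 2^{k(b-1)}$, and you need to bound it by $2^{(1/2-\delta_1)bk}$. Your plan is to linearize the phase on $\sim 2^{kb/2}$ subintervals in $\xi_3-\xi_2$ and reassemble via Bessel orthogonality plus $TT^*$. But this is precisely the hard content of \cite{BBLV}, now entangled with the additional singularity $\sgn(\xi_1-\xi_2)$; the orthogonality you cite lives only in the $\xi_3-\xi_2$ variable, while the BHT symbol couples $\xi_1$ and $\xi_2$ across all scales, so it is not clear that the $TT^*$ argument survives the coupling. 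You have not indicated how to handle this interaction, and the paper's whole point in introducing the $\ell$-vs-$k(b-1)$ trichotomy is to \emph{avoid} having to do so. Your outline also omits the interpolation with a no-decay estimate, without which you will not reach the full range $1<p_1,p_2,p_3<\infty$.
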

In order to prove this result, we need to take a closer look at each of these components.

\subsection{Further frequency decomposition of the $B_k$ operator, for $k \geq 1$}
\label{sec:localization:freq:osc} 

Now we consider $k \in \N^*$ to be fixed. We recall that $B_k$ is given by
\begin{align*}
\iiint_{\mathbb{R}^{3}}& \widehat{f_{1}}(\xi_1)\widehat{f_{2}}(\xi_2)\widehat{f_{3}}(\xi_3) \sgn(\xi_1-\xi_2) \mathfrak{m}^{b}_{k}(\xi_3-\xi_2) e^{2\pi ix(\xi_1+\xi_2+\xi_3)} \mathrm{d}\xi_1\mathrm{d}\xi_2\mathrm{d}\xi_3,
\end{align*}
and so far the functions involved are not localized in frequency (nor in space) and we still need to understand the way the symbol
\[
(\xi_1, \xi_2, \xi_3) \mapsto \sgn(\xi_1-\xi_2) \mathfrak{m}^{b}_{k}(\xi_3-\xi_2)
\] 
acts on them.
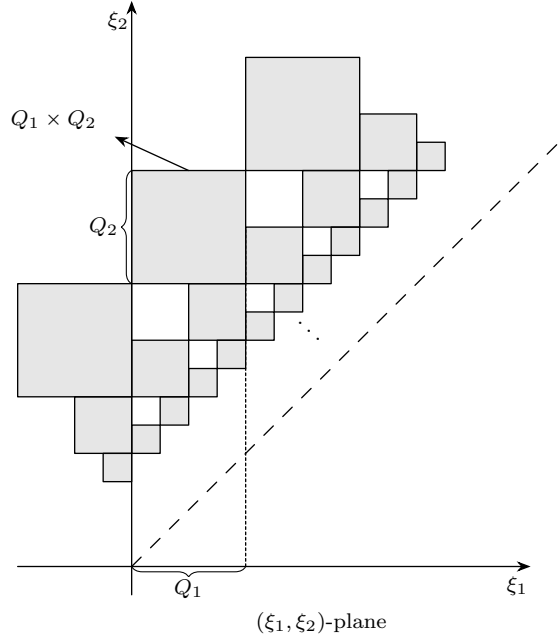
\begin{figure}[ht]
\centering
\begin{tikzpicture}[line cap=round,line join=round,>=Stealth,x=1cm,y=1cm, decoration={brace,amplitude=4pt}, scale=0.75]

\clip(-11.2,-1.5) rectangle (-1.5,11.5);

%
%

\filldraw[line width=0.5pt,color=black,fill=black,fill opacity=0.1] (-4,7) -- (-3.5,7) -- (-3.5,7.5) -- (-4,7.5) -- cycle;

\filldraw[line width=0.5pt,color=black,fill=black,fill opacity=0.1] (-4.5,6.5) -- (-4,6.5) -- (-4,7) -- (-4.5,7) -- cycle;

\filldraw[line width=0.5pt,color=black,fill=black,fill opacity=0.1] (-5,6) -- (-4.5,6) -- (-4.5,6.5) -- (-5,6.5) -- cycle;

\filldraw[line width=0.5pt,color=black,fill=black,fill opacity=0.1] (-5.5,5.5) -- (-5,5.5) -- (-5,6) -- (-5.5,6) -- cycle;

\filldraw[line width=0.5pt,color=black,fill=black,fill opacity=0.1] (-6,5) -- (-5.5,5) -- (-5.5,5.5) -- (-6,5.5) -- cycle;

\filldraw[line width=0.5pt,color=black,fill=black,fill opacity=0.1] (-6.5,4.5) -- (-6,4.5) -- (-6,5) -- (-6.5,5) -- cycle;

\filldraw[line width=0.5pt,color=black,fill=black,fill opacity=0.1] (-7,4) -- (-6.5,4) -- (-6.5,4.5) -- (-7,4.5) -- cycle;

\filldraw[line width=0.5pt,color=black,fill=black,fill opacity=0.1] (-7.5,3.5) -- (-7,3.5) -- (-7,4) -- (-7.5,4) -- cycle;

\filldraw[line width=0.5pt,color=black,fill=black,fill opacity=0.1] (-8,3) -- (-7.5,3) -- (-7.5,3.5) -- (-8,3.5) -- cycle;

\filldraw[line width=0.5pt,color=black,fill=black,fill opacity=0.1] (-8.5,2.5) -- (-8,2.5) -- (-8,3) -- (-8.5,3) -- cycle;

\filldraw[line width=0.5pt,color=black,fill=black,fill opacity=0.1] (-9,2) -- (-8.5,2) -- (-8.5,2.5) -- (-9,2.5) -- cycle;

\filldraw[line width=0.5pt,color=black,fill=black,fill opacity=0.1] (-9.5,1.5) -- (-9,1.5) -- (-9,2) -- (-9.5,2) -- cycle;

%
%

\filldraw[line width=0.5pt,color=black,fill=black,fill opacity=0.1] (-5,7) -- (-4,7) -- (-4,8) -- (-5,8) -- cycle;
\filldraw[line width=0.5pt,color=black,fill=black,fill opacity=0.1] (-6,6) -- (-5,6) -- (-5,7) -- (-6,7) -- cycle;
\filldraw[line width=0.5pt,color=black,fill=black,fill opacity=0.1] (-7,5) -- (-6,5) -- (-6,6) -- (-7,6) -- cycle;
\filldraw[line width=0.5pt,color=black,fill=black,fill opacity=0.1] (-9,3) -- (-8,3) -- (-8,4) -- (-9,4) -- cycle;
\filldraw[line width=0.5pt,color=black,fill=black,fill opacity=0.1] (-10,2) -- (-9,2) -- (-9,3) -- (-10,3) -- cycle;
\filldraw[line width=0.5pt,color=black,fill=black,fill opacity=0.1] (-8,5) -- (-7,5) -- (-7,4) -- (-8,4) -- cycle;


\filldraw[line width=0.5pt,color=black,fill=black,fill opacity=0.1] (-7,7) -- (-5,7) -- (-5,9) -- (-7,9) -- cycle;

\filldraw[line width=0.5pt,color=black,fill=black,fill opacity=0.1] (-9,5) -- (-7,5) -- (-7,7) -- (-9,7) -- cycle;

\filldraw[line width=0.5pt,color=black,fill=black,fill opacity=0.1] (-11,3) -- (-9,3) -- (-9,5) -- (-11,5) -- cycle;

%
%

%


\draw [line width=0.5pt,dash pattern=on 5pt off 5pt] (-9,0)-- (-1.5,7.5);







%
%




\draw (-11.3,7.9) node[anchor=west] {\scriptsize $Q_{1}\times Q_{2}$};
\draw [->,line width=0.5pt] (-8,7) -- (-9.3,7.6);


\draw (-7,-1) node[anchor=west] {\scriptsize $(\xi_{1},\xi_{2})$-plane};



\draw (-6.3,4.3) node[anchor=west] {\scriptsize $\ddots$};


\draw [line width=0.5pt,dash pattern=on 1pt off 1pt] (-7,0)-- (-7,6);



%
\draw [->,line width=0.5pt] (-11,0) -- (-2,0);
\draw [->,line width=0.5pt] (-9,-0.5) -- (-9,10);


\draw [decorate, color=black] (-7,0) -- (-9,0)
	node [midway, anchor=north, fill=white, inner sep=1pt, outer sep=4pt]{\scriptsize $Q_1$};
\draw [decorate, color=black] (-9,5) -- (-9,7)
	node [midway, anchor=east, fill=white, inner sep=1pt, outer sep=4pt]{\scriptsize $Q_2$};

\draw (-9.6,10) node[anchor=north west] {\scriptsize $\xi_{2}$};
\draw (-2.6,0) node[anchor=north west] {\scriptsize $\xi_{1}$};

\end{tikzpicture}
\captionsetup{justification=centering}
\caption{\footnotesize Frequency profile of the $BHT$ with Whitney squares $Q_1 \times Q_2$.} \label{figure:BHTprofile}
\end{figure}

We recall that $(\xi_1, \xi_2) \mapsto \sgn(\xi_1-\xi_2)$ is the symbol of the bilinear Hilbert transform operator, and thus it is singular along the line $\Gamma_1=\{ (\xi_1, \xi_2) : \xi_1=\xi_2\}$. The only approach that is currently known to obtain \emph{any} $L^p$ estimates for $BHT$ is to consider a Whitney decomposition of the frequency region $\R^2 \setminus \Gamma_1$, associate to it a partition of unity in order to localize the symbol to each Whitney square, and then tensorize the localized symbol through a windowed Fourier series decomposition. Even with the symbol localized and tensorized, there is not enough information in order to use orthogonality methods only. So one would need to further discretize the operator in space, thus obtaining a discrete model operator associated to phase-space \emph{Heisenberg boxes} and make good use of the phase-space disjointness of the pieces involved. We refer the reader to \cite[Chapter 6]{MS2} for more details.

One of the many building blocks of our analysis is the boundedness of $BHT$ guaranteed by Theorem \ref{thm:BHT}, which we assume throughout the manuscript. Even though we are not making use of its discretized model operator, when thinking of the bilinear Hilbert transform, the picture to have in mind is that of Figure \ref{figure:BHTprofile}.

For our analysis, it will be enough to consider the various scales making up the bilinear Hilbert transform operator. That is, we localize the frequency variable $\xi_1-\xi_2$ along frequency strips parallel to the line $\Gamma_1=\{ (\xi_1, \xi_2) : \xi_1=\xi_2\}$ and of dyadic width to obtain 
\[
\sgn(\xi_1-\xi_2)= \sum_{\ell \in \Z}\psi\big(\frac{\xi_1-\xi_2}{2^\ell}\big),
\]
where for any $\ell \in \Z$,
\begin{equation}
\label{eq:def:psi:l}
\zeta \mapsto \psi_{\ell}(\zeta):=\psi\Big(\frac{\zeta}{2^\ell}\Big)
\end{equation}
is an odd smooth function supported in the region $\ds \{ \zeta \in \R : 1/2 \leq |\zeta| \leq 2 \}$. This is equivalent to localizing dyadically the space variable $t$ from \eqref{eq:def:osc:biest}, in a way that allows to have compact support in frequency. This is mirroring the decomposition in Section \ref{sec:initial:red:space}, where we localized the variable $s$ dyadically, although the decomposition considered there has compact support in space.   

Returning to our $B_k$ operator, it becomes 
\begin{align*}
B_k(f_{1},f_{2},f_{3})(x)= \sum_{\ell \in \Z} \iiint_{\mathbb{R}^{3}}& \widehat{f_{1}}(\xi_1)\widehat{f_{2}}(\xi_2)\widehat{f_{3}}(\xi_3) \psi\big(\frac{\xi_1-\xi_2}{2^\ell}\big)\mathfrak{m}^{b}_{k}(\xi_3-\xi_2)  e^{2\pi ix(\xi_1+\xi_2+\xi_3)} \mathrm{d}\xi_1\mathrm{d}\xi_2\mathrm{d}\xi_3, 
\end{align*}
so we still need to understand the way $\psi_{\ell}(\xi_1-\xi_2)$ interacts with $\mathfrak{m}^{b}_{k}(\xi_3-\xi_2)$.

Let $N\in\mathbb{N}^{\ast}$ be a parameter to be chosen later. Guided by the stationary phase approximation in \eqref{eq:asympt:1}, we consider several regimes for the parameter $\ell$, determined by the following conditions:
\begin{itemize}
\item $\ds \mathbb{L}_1:=\{\ell \in \Z : 2^\ell \leq 2^{k(b-1)} 2^{- N b k}  \}$
\item $\ds \mathbb{L}_2:=\{\ell \in \Z : 2^{k(b-1)} 2^{- N b k} \leq 2^\ell \leq 2^{k(b-1)} 2^{ N b k}  \}$ and
\item $\ds \mathbb{L}_3:=\{\ell \in \Z : 2^{k(b-1)} 2^{N b k} \leq 2^\ell  \}$.
\end{itemize}
In other words, these regimes are determined by the scale of the bilinear Hilbert transform part, in relation to the scale of the main contribution of $\mathfrak{m}_{k}^{b}(\xi_{3}-\xi_{2})$ (see Figure \ref{figure:BigCubes2}). The sets $\mathbb{L}_j$, for $j \in \{ 1,2,3 \}$ obviously depend on the parameter $k$, but since it is fixed, we do not highlight this dependency in the notation.

\begin{figure}[ht]
\centering
\begin{tikzpicture}[line cap=round,line join=round,>=Stealth,x=1cm,y=1cm, decoration={brace,amplitude=4pt}, scale=0.75]

\clip(-11.2,-1.5) rectangle (8,11.5);

%
%

\filldraw[line width=0.5pt,color=black,fill=black,fill opacity=0.1] (-4,7) -- (-3.5,7) -- (-3.5,7.5) -- (-4,7.5) -- cycle;

\filldraw[line width=0.5pt,color=black,fill=black,fill opacity=0.1] (-4.5,6.5) -- (-4,6.5) -- (-4,7) -- (-4.5,7) -- cycle;

\filldraw[line width=0.5pt,color=black,fill=black,fill opacity=0.1] (-5,6) -- (-4.5,6) -- (-4.5,6.5) -- (-5,6.5) -- cycle;

\filldraw[line width=0.5pt,color=black,fill=black,fill opacity=0.1] (-5.5,5.5) -- (-5,5.5) -- (-5,6) -- (-5.5,6) -- cycle;

\filldraw[line width=0.5pt,color=black,fill=black,fill opacity=0.1] (-6,5) -- (-5.5,5) -- (-5.5,5.5) -- (-6,5.5) -- cycle;

\filldraw[line width=0.5pt,color=black,fill=black,fill opacity=0.1] (-6.5,4.5) -- (-6,4.5) -- (-6,5) -- (-6.5,5) -- cycle;

\filldraw[line width=0.5pt,color=black,fill=black,fill opacity=0.1] (-7,4) -- (-6.5,4) -- (-6.5,4.5) -- (-7,4.5) -- cycle;

\filldraw[line width=0.5pt,color=black,fill=black,fill opacity=0.1] (-7.5,3.5) -- (-7,3.5) -- (-7,4) -- (-7.5,4) -- cycle;

\filldraw[line width=0.5pt,color=black,fill=black,fill opacity=0.1] (-8,3) -- (-7.5,3) -- (-7.5,3.5) -- (-8,3.5) -- cycle;

\filldraw[line width=0.5pt,color=black,fill=black,fill opacity=0.1] (-8.5,2.5) -- (-8,2.5) -- (-8,3) -- (-8.5,3) -- cycle;

\filldraw[line width=0.5pt,color=black,fill=black,fill opacity=0.1] (-9,2) -- (-8.5,2) -- (-8.5,2.5) -- (-9,2.5) -- cycle;

\filldraw[line width=0.5pt,color=black,fill=black,fill opacity=0.1] (-9.5,1.5) -- (-9,1.5) -- (-9,2) -- (-9.5,2) -- cycle;

%
%

\filldraw[line width=0.5pt,color=black,fill=black,fill opacity=0.1] (-5,7) -- (-4,7) -- (-4,8) -- (-5,8) -- cycle;
\filldraw[line width=0.5pt,color=black,fill=black,fill opacity=0.1] (-6,6) -- (-5,6) -- (-5,7) -- (-6,7) -- cycle;
\filldraw[line width=0.5pt,color=black,fill=black,fill opacity=0.1] (-7,5) -- (-6,5) -- (-6,6) -- (-7,6) -- cycle;
\filldraw[line width=0.5pt,color=black,fill=black,fill opacity=0.1] (-9,3) -- (-8,3) -- (-8,4) -- (-9,4) -- cycle;
\filldraw[line width=0.5pt,color=black,fill=black,fill opacity=0.1] (-10,2) -- (-9,2) -- (-9,3) -- (-10,3) -- cycle;
\filldraw[line width=0.5pt,color=black,fill=black,fill opacity=0.1] (-8,5) -- (-7,5) -- (-7,4) -- (-8,4) -- cycle;


\filldraw[line width=0.5pt,color=black,fill=black,fill opacity=0.1] (-7,7) -- (-5,7) -- (-5,9) -- (-7,9) -- cycle;

\filldraw[line width=0.5pt,color=black,fill=black,fill opacity=0.1] (-9,5) -- (-7,5) -- (-7,7) -- (-9,7) -- cycle;

\filldraw[line width=0.5pt,color=black,fill=black,fill opacity=0.1] (-11,3) -- (-9,3) -- (-9,5) -- (-11,5) -- cycle;

%
%
\draw [->,line width=0.5pt] (-0.5,0) -- (7,0);
\draw [->,line width=0.5pt] (0,-0.5) -- (0,10);

%


\draw [line width=0.5pt,dash pattern=on 5pt off 5pt] (-9,0)-- (-1.5,7.5);


\draw [line width=0.5pt,dash pattern=on 5pt off 5pt] (0,0)-- (7.5,7.5);


\draw [line width=0.5pt,dash pattern=on 1pt off 1pt] (-0.5,1.5)-- (6.5,8.5);


\draw [line width=0.5pt,dash pattern=on 1pt off 1pt] (-1.5,2.5)-- (5.5,9.5);

%
%
\filldraw[line width=0.1pt,,dash pattern=on 0pt off 300pt,color=black,fill=black,fill opacity=0.1] (-0.3,1.7) -- (5.7,7.7) -- (4.7,8.7) -- (-1.3,2.7) -- cycle;




\draw (-11.3,7.9) node[anchor=west] {\scriptsize $Q_{1} \times Q_{2}  $};
\draw [->,line width=0.5pt] (-8,7) -- (-9.3,7.6);


\draw (-7,-1) node[anchor=west] {\scriptsize $(\xi_{1},\xi_{2})$-plane};

\draw (2,-1) node[anchor=west] {\scriptsize $(\xi_{2},\xi_{3})$-plane};


\draw (-6.3,4.3) node[anchor=west] {\scriptsize $\ddots$};


\draw [line width=0.5pt,dash pattern=on 1pt off 1pt] (-7,0)-- (-7,6);


\draw [|-|,line width=0.5pt,dash pattern=on 3pt off 3pt] (6,8) -- (5,9);
\draw (5.5,9) node[anchor=west] {\scriptsize $2^{k(b-1)} $};
\draw [|-|,line width=0.5pt,dash pattern=on 3pt off 3pt] (2,2) -- (1,3);
\draw (1.5,3) node[anchor=west] {\scriptsize $2^{k(b-1)} $};

%
\draw [->,line width=0.5pt] (-11,0) -- (-2,0);
\draw [->,line width=0.5pt] (-9,-0.5) -- (-9,10);


\draw [decorate, color=black] (-7,0) -- (-9,0)
	node [midway, anchor=north, fill=white, inner sep=1pt, outer sep=4pt]{\scriptsize $Q_1$};
\draw [decorate, color=black] (-9,5) -- (-9,7)
	node [midway, anchor=east, fill=white, inner sep=1pt, outer sep=4pt]{\scriptsize $Q_2$};

\draw (6.4,0) node[anchor=north west] {\scriptsize $\xi_{2}$};
\draw (-0.6,10) node[anchor=north west] {\scriptsize $\xi_{3}$};
\draw (-9.6,10) node[anchor=north west] {\scriptsize $\xi_{2}$};
\draw (-2.6,0) node[anchor=north west] {\scriptsize $\xi_{1}$};

\end{tikzpicture}
\captionsetup{justification=centering}
\caption{\footnotesize Display of frequency information of the symbols $\sgn(\xi_{1}-\xi_{2})$ and the moral support of $\mathfrak{m}_{k}^{b}(\xi_{3}-\xi_{2})$.} \label{figure:BigCubes2}
\end{figure}
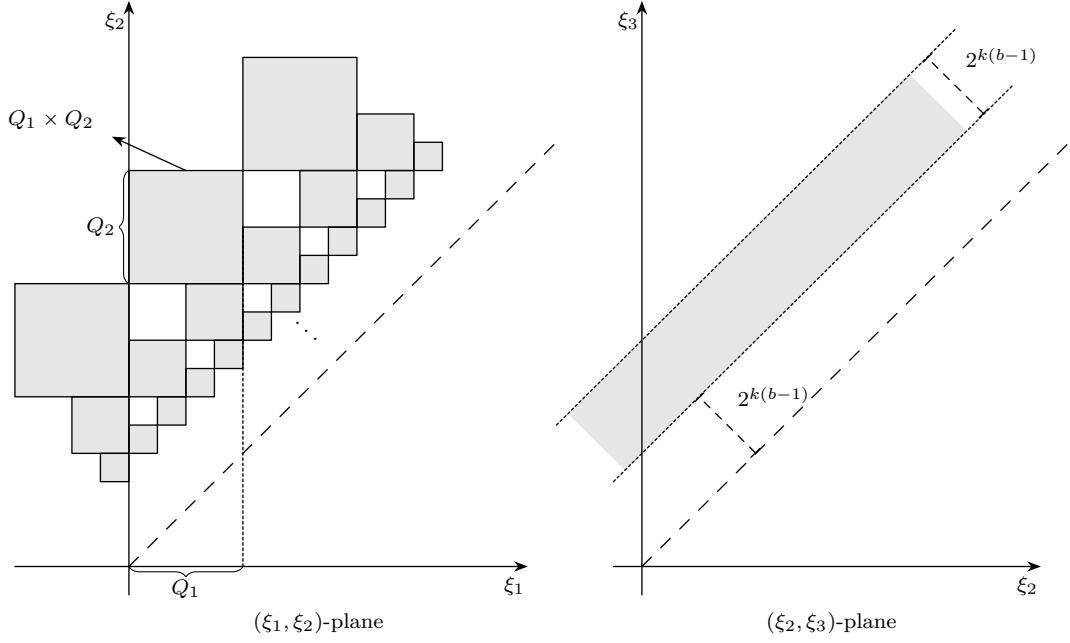

These regimes define a decomposition of $B_k(f_{1},f_{2},f_{3})(x)$ as
\begin{equation}
\label{eq:def:op:osc}
\begin{aligned}
&\sum_{\ell \in \mathbb{L}_1} \iiint_{\mathbb{R}^{3}} \widehat{f_{1}}(\xi_1)\widehat{f_{2}}(\xi_2)\widehat{f_{3}}(\xi_3) \psi\big(\frac{\xi_1-\xi_2}{2^\ell}\big)\mathfrak{m}^{b}_{k}(\xi_3-\xi_2)  e^{2\pi ix(\xi_1+\xi_2+\xi_3)} \mathrm{d}\xi_1\mathrm{d}\xi_2\mathrm{d}\xi_3 \\
 + & \sum_{\ell \in \mathbb{L}_2} \iiint_{\mathbb{R}^{3}} \widehat{f_{1}}(\xi_1)\widehat{f_{2}}(\xi_2)\widehat{f_{3}}(\xi_3) \psi\big(\frac{\xi_1-\xi_2}{2^\ell}\big)\mathfrak{m}^{b}_{k}(\xi_3-\xi_2)  e^{2\pi ix(\xi_1+\xi_2+\xi_3)} \mathrm{d}\xi_1\mathrm{d}\xi_2\mathrm{d}\xi_3 \\
 + & \sum_{\ell \in \mathbb{L}_3} \iiint_{\mathbb{R}^{3}} \widehat{f_{1}}(\xi_1)\widehat{f_{2}}(\xi_2)\widehat{f_{3}}(\xi_3) \psi\big(\frac{\xi_1-\xi_2}{2^\ell}\big)\mathfrak{m}^{b}_{k}(\xi_3-\xi_2)  e^{2\pi ix(\xi_1+\xi_2+\xi_3)} \mathrm{d}\xi_1\mathrm{d}\xi_2\mathrm{d}\xi_3 \\
 &:= B^{(1)}_k(f_{1},f_{2},f_{3})(x)+B^{(2)}_k(f_{1},f_{2},f_{3})(x)+B^{(3)}_k(f_{1},f_{2},f_{3})(x).
\end{aligned}
\end{equation}

Each of these terms will be treated separately in Sections \ref{sec:B_k^1}, \ref{sec:B_k^2}, \ref{sec:B_k^3}. The decay in $k$ claimed in Theorem \ref{thm:intermediary:osc:k:fixed} is in part a consequence of the boundedness of the following operator: for $k \in \N^*$, define
$T_k$ by
\begin{equation}
\label{def:T_k:decay}
T_k(F, G)(x):= \mathrm{p.v.} \int_{\R} F(x-t)G(x+t) \, e^{2 \pi i t^b} \rho_k(t) \mathrm{d} t,
\end{equation}
where $\rho_k(\cdot)$ is defined as above in \eqref{eq:def:rho}. Then, as a direct consequence of Theorem 2.1 in \cite{BBLV}, we have
\begin{thmx}
\label{thm:conseq:BCa}
For any exponents $q, q_1, q_2$ such that $1<q_1, q_2 \leq \infty$, $\frac{2}{3}<q<\infty$ and $\ds \frac{1}{q_1}+\frac{1}{q_2}=\frac{1}{q}$, there exists $\delta=\delta(q_1,q_2) >0$ such that for all $F \in L^{q_1}(\R), G \in L^{q_2}(\R)$, and for all $k \in \N^*$, we have
\begin{equation}
\label{eq:T_k:decay}
\|T_k(F,G)\|_{q} \lesssim 2^{-\delta b k}  \|F\|_{q_1}\,\|G\|_{q_2}.
\end{equation}
\end{thmx}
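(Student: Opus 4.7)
The plan is to reduce $T_k$ to a single-scale linearization of the $BC^b$ operator from \eqref{eq:def:BCa} by a rescaling argument, and then apply the decay estimates for fixed modulation parameter $\lambda$ provided by Theorem 2.1 of \cite{BBLV}. The scaling invariance built into the H\"older exponents $1/q_1+1/q_2=1/q$ will absorb all the dilation Jacobians, leaving only the oscillatory decay factor.

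First I would substitute $t=2^k u$ in
\[
T_k(F,G)(x) = \int_{\R} F(x-t)G(x+t)\, e^{2\pi i t^{b}}\, 2^{-k}\rho(2^{-k}t)\, \mathrm{d} t
\]
to obtain
\[
T_k(F,G)(x) = \int_{\R} F(x-2^k u)\, G(x+2^k u)\, e^{2\pi i\, 2^{kb} u^{b}}\, \rho(u)\, \mathrm{d} u.
\]
Next, setting $\tilde F(y):=F(2^k y)$, $\tilde G(y):=G(2^k y)$ and evaluating at $x=2^k y$, this becomes
\[
T_k(F,G)(2^k y) = \int_{\R} \tilde F(y-u)\, \tilde G(y+u)\, e^{2\pi i\, 2^{kb} u^{b}}\, \rho(u)\, \mathrm{d} u =: S_{\lambda}(\tilde F,\tilde G)(y),
\]
with $\lambda = 2^{kb}$. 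The operator $S_\lambda$ is exactly a smoothly truncated, single-scale linearization of the $BC^b$ operator in \eqref{eq:def:BCa}: its kernel is a compactly supported bump of unit scale modulated by the oscillatory factor $e^{2\pi i \lambda u^{b}}$.

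Theorem 2.1 of \cite{BBLV} provides bounds for exactly this type of building block, with quantitative gain in the modulation parameter: for $1<q_1,q_2\leq\infty$, $\frac{2}{3}<q<\infty$, $\frac{1}{q_1}+\frac{1}{q_2}=\frac{1}{q}$, there is $\delta=\delta(q_1,q_2)>0$ such that
\[
\|S_\lambda(\tilde F,\tilde G)\|_q \lesssim \lambda^{-\delta}\, \|\tilde F\|_{q_1}\, \|\tilde G\|_{q_2}.
\]
I would then undo the dilations: since $\|T_k(F,G)\|_q = 2^{k/q}\|S_\lambda(\tilde F,\tilde G)\|_q$ and $\|\tilde F\|_{q_1} = 2^{-k/q_1}\|F\|_{q_1}$, $\|\tilde G\|_{q_2}=2^{-k/q_2}\|G\|_{q_2}$, the scaling relation $\frac{1}{q}=\frac{1}{q_1}+\frac{1}{q_2}$ yields
\[
\|T_k(F,G)\|_q \lesssim 2^{k\left(\frac{1}{q}-\frac{1}{q_1}-\frac{1}{q_2}\right)} \lambda^{-\delta}\, \|F\|_{q_1}\|G\|_{q_2} = 2^{-\delta b k}\, \|F\|_{q_1}\|G\|_{q_2},
\]
which is the claimed bound.

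The only nontrivial point is to be sure that the cited input from \cite{BBLV} really delivers decay $\lambda^{-\delta}$ for a single fixed $\lambda$, not merely a uniform bound for the supremum over $\lambda$. This is the main place where care is required: one must inspect the proof of Theorem 2.1 in \cite{BBLV} and verify that the decay estimate is established on each linearization separately, so that it persists when the linearizing parameter is frozen (as is the case here, where $\lambda=2^{kb}$ is determined by the scale $k$). Once that is confirmed, the rescaling argument above is entirely formal and no further work is needed.
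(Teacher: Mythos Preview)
Your proposal is correct and takes essentially the same approach as the paper: both reduce to Theorem~2.1 of \cite{BBLV} with a constant linearization parameter. The paper dispatches the result in a single remark, observing that $T_k$ is the single-scale piece of $BC^b$ with $\lambda(x)\equiv 1$ and phase height $|t|^b\sim 2^{kb}$; your explicit rescaling to a unit-scale operator with $\lambda=2^{kb}$ is just a more detailed rendering of the same observation.
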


\begin{remark}
Indeed, our situation corresponds to the case when the measurable function $\lambda(x)\equiv 1$ is constant, so that the height of the oscillatory phase depends only on the parameter $t$, which is such that $\ds|t| \sim 2^k$.
\end{remark}

Theorem \ref{thm:conseq:BCa} remains of course valid if we replace $T_k$ by
\begin{equation}
\label{def:T_k:decay:2}
\tilde T_k(F, G)(x):= \mathrm{p.v.} \int_{\R} F(x-t)G\big(x+\frac{t}{2}\big) \, e^{2 \pi i t^b} \rho_k(t) \mathrm{d} t.
\end{equation}

Before moving on with the analysis of the  pieces in \eqref{eq:def:op:osc}, we first take care of the non-oscillatory part.

\section{The non-oscillatory part $B_{0}$}\label{sec:nonosc}

Here we deal with the low-oscillatory part of our operator. In other words, we show that
$$ B_0(f_{1},f_{2},f_{3})(x)= \sum_{k \leq 0} B_k(f_1, f_2, f_3)(x)$$
is close in nature to the classical biest operator from \cite{MTT2}. More precisely, we show that $B_{0}$ verifies the hypotheses of Theorem 2.2 in \cite{JJ}, and hence it is bounded within the Banach range.

If we look at $B_0$ in frequency, we realize that it writes as
\begin{align*}
B_0(f_1, f_2, f_3)(x)= \sum_{k \leq 0} \iiint_{\mathbb{R}^{3}}& \widehat{f_{1}}(\xi_1)\widehat{f_{2}}(\xi_2)\widehat{f_{3}}(\xi_3) \sgn(\xi_1-\xi_2) \mathfrak{m}^{b}_{k}(\xi_3-\xi_2) e^{2\pi ix(\xi_1+\xi_2+\xi_3)} \mathrm{d}\xi_1\mathrm{d}\xi_2\mathrm{d}\xi_3.
\end{align*}
Even though this should be similar to the trilinear operator having symbol
\[
\sgn(\xi_1-\xi_2) \sgn(\xi_3-\xi_2) \cdot \varphi(\xi_3-\xi_2), 
\]
where $\varphi$ is a smooth function supported on $[-2, 2]$, we cannot directly apply the approach used for treating biest in \cite{MTT2}. The particular form of the symbol of $T$ (the biest operator defined in \eqref{eq:def:biest}), which is given\footnote{The trilinear operator having frequency symbol $\sgn(\xi_1-\xi_2) \sgn(\xi_3-\xi_2)$ is equivalent to the one with frequency symbol $\one_{\{ \xi_1 <\xi_2 < \xi_3  \}}$, modulo terms of the form $BHT(f_1, f_2) \cdot f_3$ and $f_1 \cdot BHT(f_2, f_3)$.} by $\one_{\{ \xi_1 <\xi_2 < \xi_3  \}}$ makes the decomposition easier. This is because the symbol which is constant and equal to $1$ in the region of the frequency space where $\xi_1 <\xi_2 < \xi_3$, is still going to be equal to 1 in the regions
\[
\{ (\xi_1, \xi_2, \xi_3) : \xi_1 \ll \frac{\xi_2+\xi_3}{2}, \quad \xi_2<\xi_3  \} \text{    and    } \{ (\xi_1, \xi_2, \xi_3) : \frac{\xi_1+\xi_2}{2} \ll \xi_3 , \quad \xi_1<\xi_2  \}.
\]
These regions are important because the components of the operator restricted to them resemble local phase-space composition of two bilinear Hilbert transform operators.

So it is crucial to understand how to deal with non-constant symbols. This was studied in \cite{JJ}, where $L^p$ estimates were proven for the trilinear operator $T_{m_1 m_{2}}$ given by
\[
T_{m_1 m_2}(f_1,f_2,f_3)(x):=\int_{\mathbb{R}^{3}}m_1(\xi_1, \xi_2) m_2(\xi_2, \xi_3)\widehat{f}_{1}(\xi_{1})\widehat{f}_{2}(\xi_{2})\widehat{f}_{3}(\xi_{3})\mathrm{d}\xi_{1}\mathrm{d}\xi_{2}\mathrm{d}\xi_{3},
\] 
where $m_j(\xi_j, \xi_{j+1})$ is a two-dimensional symbol smooth away from the line $\Gamma_j:=\{ (\xi_j, \xi_{j+1}) \in \R^2 : \xi_j=\xi_{j+1} \}$, satisfying for $\alpha$ sufficiently large
\begin{equation}\label{JJ-maincond}
    |\partial^{\alpha}(m_{j}(\xi))|\lesssim\frac{1}{\textnormal{dist}(\Gamma_{j},\xi)^{|\alpha|}},\quad j\in\{1,2\}.
\end{equation}

Denote by $\textbf{D}^{\prime}$ the interior of the convex hull of the following twelve extremal points $A_{1},\ldots, A_{12}$: 

\[ \begin{array}{lll}%
A_{1}=(1,1/2,1,-3/2),  &  A_{2}=(1/2,1,1,-3/2),  &  A_{4}=(1,1/2,-3/2,1),\\
A_{4}=(1,1/2,-3/2,1),  &  A_{5}=(1,-1/2,0,1/2), &  A_{6}=(1,-1/2,1/2,0),\\
A_{7}=(1/2,-1/2,0,1),  &  A_{8} =(1/2,-1/2,1,0),  &  A_{9}=(-1/2,1,0,1/2), \\
A_{10}=(-1/2, 1, 1/2, 0),  &  A_{11}=(-1/2, 1/2, 1, 0),  &  A_{12}=(-1/2, 1/2, 0, 1).
\end{array}\]%

Additionally, denote by $\textbf{D}^{\prime\prime}$ the the open interior of the convex hull of the extremal points $\tilde{A}_{1},\ldots,\tilde{A}_{12}$, where $\tilde{A}_{j}$ is given by exchanging the 1st the 3rd coordinates of $A_{j}$. Define $\textbf{D}=\textbf{D}^{\prime}\cap\textbf{D}^{\prime\prime}$ and notice that $(1/q_1, 1/q_2, 1/q_3, 1- 1/q) \in \textbf{D}$ whenever $1<q_1, q_2, q_3\leq \infty$, $1 \leq q <\infty$ so that $1/q_1+1/q_2+1/q_3=1/q$. 

We will use the following theorem of \cite{JJ} to deal with the non-oscillatory part $B_{0}$:
\begin{thmx}[2.2 of \cite{JJ}]\label{JJ-mainthm} The operator $T_{m_1 m_2}$ maps
$$T_{m_1 m_2}: L^{q_1}\times L^{q_2}\times L^{q_3}\rightarrow L^{q},$$
provided $\ds (\frac{1}{q_1}, \frac{1}{q_2}, \frac{1}{q_3}, 1- \frac{1}{q})\in\textnormal{\textbf{D}}$.
\end{thmx}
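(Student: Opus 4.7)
The plan is to follow the time-frequency discretization machinery developed for biest-type operators in \cite{MTT2} and adapt it to symbols that are only of $BHT$-type (not constants) along the two singular directions. The first step is a Whitney decomposition of each $m_j$ with respect to its singular line $\Gamma_j$: decompose $\mathbb{R}^2 \setminus \Gamma_j$ into a family of Whitney squares $\{P_j\}$ of side length comparable to $\dist(P_j, \Gamma_j)$, and take a smooth partition of unity $\chi_{P_j}$ adapted to these squares. On each $P_j$, the rescaled symbol $m_j \cdot \chi_{P_j}$ is smooth and compactly supported; the bound \eqref{JJ-maincond} on derivatives ensures a uniformly rapidly convergent Fourier series expansion into tensor products of one-variable bumps $\varphi_{P_j}^{(1)}(\xi_j) \varphi_{P_j}^{(2)}(\xi_{j+1})$ with Schwartz coefficients. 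This reduces $T_{m_1 m_2}$ to a sum, over pairs of Whitney squares $(P_1, P_2)$, of trilinear forms whose symbols are tensor products in $\xi_1$, $\xi_2$, $\xi_3$ — the middle variable $\xi_2$ receiving \emph{two} independent wave packets coming from the two decompositions.

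Next, I would perform the standard space discretization, reparameterizing each factor by translations on a grid compatible with the frequency cube. This produces a discrete trilinear model form indexed by quintuples of Heisenberg tiles $s_1, s_2, s_2', s_3$, where $s_2$ and $s_2'$ live in dual frequency intervals coming from the two symbols and share the same spatial scale. The sum organizes itself as a sum over pairs of trees $(T_1, T_2)$ rooted in cubes at different scales. I would split the analysis into three regimes according to the ratio of the scales of $P_1$ and $P_2$: $|P_1| \ll |P_2|$, $|P_1| \sim |P_2|$, and $|P_1| \gg |P_2|$. In the comparable-scale regime, the two $f_2$ wave packets live at essentially the same frequency scale and can be fused, reducing the form to a biest-type model already handled by \cite{MTT2}. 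In the well-separated regimes, the finer decomposition "sees" a single scale at a time for $f_2$, and the model essentially factors as a composition of a $BHT$ model on $(f_1, f_2)$ (at the smaller scale) with a local averaging/paraproduct in $f_3$ (at the larger scale), or the symmetric version.

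The $L^p$ control within the convex polytope $\textbf{D}$ is then obtained via generalized restricted type interpolation (as in Muscalu--Schlag \cite{MS2}): for each extremal point $A_j$ (respectively $\tilde A_j$), one sets up a scalar trilinear form on a \emph{major subset} of a chosen level set of one of the three functions, and establishes a single-tile and single-tree estimate controlled by three size/energy-type quantities $\mathrm{size}_i(f_i) \lesssim \|f_i\|_{q_i} / |F|^{1/q_i}$. The core ingredients are (i) the standard John--Nirenberg size and energy estimates for $BHT$-trees, applied to each of the two $BHT$-type components, and (ii) a combinatorial summation over trees in the spirit of the biest argument that handles the double indexing by $T_1$ and $T_2$. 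Interpolating between the twelve estimates at the vertices $A_j$ and the twelve at $\tilde A_j$ gives boundedness on the intersection $\textbf{D}$.

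The main obstacle is exactly the point that already makes the biest harder than $BHT$: the middle function $f_2$ must absorb energy estimates coming from \emph{both} symbols simultaneously, with its two wave packet decompositions living on different grids determined by $P_1$ and $P_2$. The trick, and the combinatorial heart of the argument, is to exploit the Whitney structure — the fact that the $\xi_2$-interval of a Whitney square $P_j$ has length comparable to its distance to $\Gamma_j$ — to ensure that, in each of the three scale regimes, the $f_2$ wave packets from the dominant symbol are at a frequency scale much finer than (or comparable to) those from the other symbol. This permits organizing the trees so that an energy estimate for $f_2$ is invoked only once, and is ultimately what confines the boundedness region to the polytope $\textbf{D}$ rather than the wider biest range.
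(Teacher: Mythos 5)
This statement is not proved in the paper at all: it is Theorem 2.2 of \cite{JJ}, imported verbatim as a black box (the paper's only work in Section \ref{sec:nonosc} is to verify that the symbol of $B_0$ satisfies the hypothesis \eqref{JJ-maincond}, i.e.\ that $m(\zeta)$ in \eqref{eq:m:zeta} is a Mikhlin symbol). So there is no in-paper proof to compare your proposal against; what you have written is an attempted reconstruction of the proof in \cite{JJ} itself.

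Judged on that basis, your outline does capture the broad strategy of \cite{JJ} (and of the biest paper \cite{MTT2}): Whitney decomposition of each $m_j$ about $\Gamma_j$, tensorization via Fourier series with rapidly decaying coefficients guaranteed by \eqref{JJ-maincond}, discretization to a tile model in which $f_2$ carries wave packets from two independent families, and restricted-type interpolation at the extremal points of $\textbf{D}$ using size/energy estimates. But it remains a strategy statement exactly where the real difficulty sits, and two of your structural claims would not survive as stated. First, in the separated-scale regimes the form does not ``essentially factor as a composition of a $BHT$ model on $(f_1,f_2)$ with a local averaging/paraproduct in $f_3$'': the two tile families stay entangled through $f_2$, and the known argument runs a genuinely biest-type analysis on pairs of trees, with the energy of $f_2$ shared between the two families; the single-tree estimate and the tree-selection/counting argument for this doubly indexed model are precisely the content you would have to supply. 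Second, the comparable-scale case cannot simply be ``fused and handled by \cite{MTT2}'': the biest argument there exploits the specific indicator symbol $\one_{\{\xi_1<\xi_2<\xi_3\}}$ (which is constant on large frequency regions, allowing the local decomposition into compositions of $BHT$'s), and redoing the estimates with arbitrary Fourier coefficients from two unrelated Whitney grids is exactly what \cite{JJ} does — and is the reason the admissible region is the smaller polytope $\textbf{D}$ rather than the full biest range. So, as a proof of Theorem \ref{JJ-mainthm}, the proposal has genuine gaps; as a description of the known route it is broadly correct, and in any case the paper itself neither needs nor provides these details, since it only uses the theorem as stated.
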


The rest of this section is dedicated to proving the following claim.

\begin{claim} The operator $B_0$ satisfies the hypotheses of Theorem \ref{JJ-mainthm}.  
\end{claim}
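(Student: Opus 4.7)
The plan is to identify $B_0$ with a multiplier operator $T_{m_1 m_2}$ as in \cite{JJ} and then invoke Theorem \ref{JJ-mainthm}. Writing $B_0$ in frequency, the natural choice is $m_1(\xi_1, \xi_2) := \sgn(\xi_1 - \xi_2)$ (the $BHT$ symbol, which trivially satisfies \eqref{JJ-maincond}) and $m_2(\xi_2, \xi_3) := g(\xi_3 - \xi_2)$, where $g(\zeta) := \sum_{k \leq 0} \mathfrak{m}^b_k(\zeta)$. Since $\textnormal{dist}(\Gamma_2, (\xi_2, \xi_3)) \sim |\xi_3-\xi_2|$ and mixed partials of $m_2$ reduce to ordinary derivatives of $g$, checking \eqref{JJ-maincond} for $m_2$ amounts to showing that $g$ is a Mikhlin symbol on $\R$, i.e.\ $|g^{(j)}(\zeta)| \lesssim |\zeta|^{-j}$ for all $\zeta \neq 0$ and sufficiently many $j \geq 0$.

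I would estimate each $\mathfrak{m}^b_k(\zeta)$ for $k \leq 0$ separately in the regimes $|2^k\zeta| \lesssim 1$ and $|2^k\zeta| \gtrsim 1$. For $|2^k\zeta| \gtrsim 1$, the phase $\varphi(s) = -2^k s \zeta + 2^{kb} s^b$ has $|\varphi'(s)| \sim 2^k|\zeta|$ on $\textrm{supp}(\rho)$ (using $2^{kb} \leq 1$ for $k \leq 0$), so repeated integration by parts in $s$ produces arbitrary decay $|\partial_\zeta^j \mathfrak{m}^b_k(\zeta)| \lesssim 2^{kj}(2^k|\zeta|)^{-M}$. For $|2^k\zeta| \lesssim 1$, the direct bound $|\partial_\zeta^j \mathfrak{m}^b_k(\zeta)| \lesssim 2^{kj}$ is adequate when $j \geq 1$, while for $j = 0$ one needs the cancellation $\int \rho = 0$ (since $\rho$ is odd) to write
\[
\mathfrak{m}^b_k(\zeta) = \int \rho(s)\bigl(e^{-2\pi i 2^k s\zeta}e^{2\pi i 2^{kb} s^b} - 1\bigr) \mathrm{d}s,
\]
which via $|e^{i\theta}-1| \lesssim |\theta|$ is of order $2^k|\zeta| + 2^{kb}$; this is summable in $k \leq 0$ precisely because $b > 0$ ensures $\sum_{k \leq 0} 2^{kb} < \infty$.

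Summing over $k \leq 0$ by splitting at $k_0 \sim -\log_2 |\zeta|$ when $|\zeta| \geq 1$ (no splitting is needed for $|\zeta| \leq 1$), the geometric series in each regime are dominated by their endpoint contributions at $k = k_0$ and produce $|g^{(j)}(\zeta)| \lesssim |\zeta|^{-j}$ uniformly in $\zeta \neq 0$. With this Mikhlin bound in hand, $m_2$ satisfies \eqref{JJ-maincond} and Theorem \ref{JJ-mainthm} delivers the boundedness of $B_0$ on $L^{q_1} \times L^{q_2} \times L^{q_3} \to L^q$ in the Banach range, which contains the full range claimed in Theorem \ref{mainthm}. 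The main technical delicacy lies in the $j=0$ case in the low-frequency regime, where the cancellation of $\rho$ must be combined with the smallness of $2^{kb}$ as $k \to -\infty$ to get a summable tail; once that is in place, matching the two regimes at the transition scale $k_0$ is routine and yields the sharp power $|\zeta|^{-j}$.
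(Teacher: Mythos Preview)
Your proposal is correct and reaches the same conclusion as the paper, but by a genuinely different route at the technical core. Both arguments reduce the claim to showing that the one-variable function $g(\zeta)=\sum_{k\le 0}\mathfrak{m}^b_k(\zeta)$ is a Mikhlin symbol. The paper does this by Taylor-expanding the oscillatory factor $e^{2\pi i 2^{kb}s^b}=\sum_{n\ge 0}\frac{(2\pi i)^n}{n!}2^{kbn}s^{bn}$, which converts each $\mathfrak{m}^b_k$ into a power series of ordinary Fourier transforms $\widehat{\rho_n}(2^k\zeta)$ with $\rho_n(s)=s^{bn}\rho(s)$; the $n=0$ term is handled via $\hat\rho(0)=0$ and Schwartz decay, while the $n\ge 1$ terms carry the extra factor $2^{kbn}$ that makes them summable in $k\le 0$, at the cost of tracking the $n$-dependent constants $c_{n,\alpha}$. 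You instead keep the full phase $\varphi(s)=-2^ks\zeta+2^{kb}s^b$ intact and exploit that for $k\le 0$ one has $2^{kb}\le 1$, so the curvature term contributes only $O_b(1)$ to $\varphi'$ on $\mathrm{supp}\,\rho$; this lets non-stationary phase run unobstructed whenever $|2^k\zeta|\gg 1$, and in the complementary regime you combine $\int\rho=0$ with the bound $2^k|\zeta|+2^{kb}$ to get a summable tail. Your approach is more direct and avoids the series bookkeeping; the paper's approach has the virtue of reducing everything to elementary Schwartz-function estimates with no oscillatory-integral analysis whatsoever. Either way the Mikhlin bound follows, and Theorem~\ref{JJ-mainthm} then gives the Banach-range boundedness of $B_0$ exactly as you state.
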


\begin{proof}
From \eqref{eq:def:op:freq}, we notice that the symbol of non-oscillatory part $B_{0}$ is given by
\begin{equation}
  \sgn(\xi_1-\xi_2) \Big(\sum_{k \in \Z}  \int_{\R}e^{-2 \pi is (\xi_3-\xi_2)} e^{2\pi is^{b}} 2^{-k}\rho(2^{-k}s) \mathrm{d}s \Big).
\end{equation}
Since  $\sgn(\xi_1-\xi_2)$ is smooth away from the line $\Gamma_1$ and clearly satisfies \eqref{JJ-maincond}, it suffices to check that the symbol $m$ given by
\begin{equation}
\label{eq:m:zeta}
m(\zeta):= \sum_{k \leq 0}  \int_{\R}e^{-2 \pi is \zeta} e^{2\pi is^{b}} 2^{-k}\rho(2^{-k}s) \mathrm{d}s 
\end{equation}
is a Mikhlin symbol. More precisely, we will prove that 
\begin{equation}
\label{eq:Mikl:low:osc}
|\partial_{\zeta}^\alpha m(\zeta)| \lesssim (1+|\zeta|)^{-\alpha}
\end{equation}
for sufficiently many $\alpha$. Using the analyticity of the exponential function, we can write $m(\zeta)$ as 
\begin{align*}
\sum_{n \geq 0} \frac{(2 \pi i)^n}{n!} \sum_{k \leq 0}  \int_{\R}e^{-2 \pi is \zeta} s^{b n} 2^{-k}\rho(2^{-k}s) \mathrm{d}s = \sum_{n \geq 0} \frac{(2 \pi i)^n}{n!} \sum_{k \leq 0} 2^{kbn}  \int_{\R}e^{-2 \pi i 2^k s \zeta} s^{b n} \rho(s) \mathrm{d}s.
\end{align*}
Given the fast decay of $2^{kbn}$ for $k\leq 0$ and $n \geq 0$, it is enough to study
\begin{equation}
\label{eq:def:m_n}
m_n(\zeta):=\sum_{k \leq 0} 2^{kbn}  \big( \int_{\R}e^{-2 \pi i 2^k s \zeta} s^{b n} \rho(s) \mathrm{d}s \big).
\end{equation}
If we denote $\ds \rho_{n}(s):=s^{b n} \rho(s)$, we realize that $m_n(\zeta)$ corresponds to 
\[
m_n(\zeta)=\sum_{k \leq 0} 2^{kbn}\widehat{\rho_n}(2^k \zeta).
\]
Since $\rho$, as defined in \eqref{eq:def:rho}, is compactly supported in space, so is $\rho_n$, therefore $\rho_n$ cannot be compactly supported in frequency. Recall also that $\rho$ has average zero, which in frequency translates into $\ds \hat{\rho}(0)=0$.

\textit{Case $n=0$.} Here we have
\[
m_0(\zeta)= \sum_{k \leq 0} \hat \rho (2^k \zeta).
\]
Since $\ds \hat{\rho}(0)=0$, the mean value theorem implies that 
\[
|\hat \rho (2^k \zeta)|\lesssim 2^k |\zeta| \| (\hat \rho)'\|_\infty \lesssim 2^k |\zeta|.
\]
On the other hand, given that $\hat \rho$ is a Schwartz functions, we also have that $|\hat \rho (2^k \zeta)|\lesssim_M (2^k |\zeta|)^{-M}$ for any $M \in \mathbb{N}^*$. The decaying estimate will be particularly useful for values of $M$ large enough - a bit larger than the number of derivatives considered. Combining the two estimates that we have for $\hat \rho (2^k \zeta)$, we get
\[
|m_0(\zeta)| \lesssim  \sum_{k \leq 0} \min \big( 2^k |\zeta| , (2^k |\zeta|)^{-M} \big).
\]
If $|\zeta| \leq 1$, we directly have $\ds |m_0(\zeta)| \lesssim  \sum_{k \leq 0}  2^k |\zeta| \lesssim 1$. But if $|\zeta|>1$, then we split the sum above as
\[
|m_0(\zeta)| \lesssim  \sum_{k : 2^k \leq |\zeta|^{-1}} 2^k |\zeta| + \sum_{k: |\zeta|^{-1} \leq 2^k \leq 1} (2^k |\zeta|)^{-M} \lesssim 1. 
\]
So in either case we get $\ds |m_0(\zeta)| \lesssim 1$. The analysis is similar for the derivatives of the one-variable symbol $m_0$, provided we take $M \geq \alpha+1$:
\[
|\partial^\alpha_{\zeta} m_0(\zeta)| \lesssim  \sum_{k \leq 0} 2^{k \alpha} |(\hat \rho)^{(\alpha)}(2^k \zeta)| \lesssim \sum_{k \leq 0} 2^{k \alpha} \min(1, (2^k |\zeta|)^{-M}).
\]
If $|\zeta| \leq 1$, this implies
\[
|\partial_{\zeta}^\alpha m_0(\zeta)| \lesssim \sum_{k \leq 0} 2^{k \alpha} \lesssim 1.
\]
And if $|\zeta|>1$, for $M$ large enough we again have 
\begin{align*}
|\partial_{\zeta}^\alpha m_0(\zeta)| \lesssim & \sum_{k : 2^k \leq |\zeta|^{-1}} 2^{k\alpha} + \sum_{k: |\zeta|^{-1} \leq 2^k \leq 1} 2^{-k (M-\alpha)} |\zeta|^{-M} \\
\lesssim & |\zeta|^{-\alpha} + |\zeta|^{M-\alpha}  |\zeta|^{-M} \lesssim |\zeta|^{-\alpha}.
\end{align*}
Notice that it is enough to take $M=\alpha+1$. Thus we have proved that for any $\alpha \geq 0$,
\[
|\partial_{\zeta}^\alpha m_0(\zeta)|\lesssim (1+|\zeta|)^{-\alpha}.
\]
This means that $m_0(\zeta)$ is a Mikhlin symbol (having even bounded derivatives near the origin). 

\textit{Case $n\geq 1$.} Returning to $m_n(\zeta)$ from \eqref{eq:def:m_n}, for $n \geq 1$, we immediately see that
\[
|m_n(\zeta)| \lesssim \sum_{k \leq 0} 2^{kbn} \lesssim \frac{2^{bn}}{2^{bn}-1}. 
\]
Here we also track the dependence on $n$ for technical reasons that will be made clear later. 

Computing $\partial_{\zeta}^\alpha m_n(\zeta)$ for $\alpha \geq 0$,
\[
\partial_{\zeta}^\alpha m_n(\zeta)=\sum_{k \leq 0} 2^{kbn} (- 2 \pi i )^\alpha 2^{k \alpha} \big(  \int_{\R}e^{-2 \pi i 2^k s \zeta} s^{b n +\alpha} \rho(s) \mathrm{d}s   \big).
\]
and, integrating by parts $\alpha+1$ times and taking into account the properties of the support of $\rho$, we conclude that 
\[
|\partial_{\zeta}^\alpha m_n(\zeta)| \lesssim_{\alpha} (bn+\alpha) (bn+\alpha-1) \cdot \ldots \cdot (bn) \cdot 2^{bn+\alpha} \sum_{k \leq 0} 2^{kbn+k \alpha} \min \big( 1, 2^{-k (\alpha+1)} |\zeta|^{- (\alpha+1)} \big).
\]
We denote $\ds c_{n, \alpha}:=2^{bn+\alpha}(bn+\alpha) (bn+\alpha-1) \cdot \ldots \cdot (bn)$, which corresponds, up to constants depending only on $\alpha$, to $\ds \| \widehat{\rho_n}^{(\alpha)}\|_{\infty}$.

If $|\zeta| \leq 1$ we get 
\[
|\partial_{\zeta}^\alpha m_n(\zeta)|\lesssim_{\alpha} c_{n, \alpha} \sum_{k \leq 0} 2^{kbn+k \alpha} \lesssim c_{n, \alpha} \sum_{k \leq 0} 2^{k \alpha} \lesssim c_{n, \alpha}, 
\]
and if $|\zeta|>1$, then as before we have
\begin{align*}
|\partial_{\zeta}^\alpha m_n(\zeta)|&\lesssim_{\alpha} c_{n, \alpha}  \sum_{k: 2^{k} \leq |\zeta|^{-1}} 2^{k \alpha} + c_{n, \alpha} \sum_{k: 2^{-k} \leq |\zeta|} 2^{k \alpha} 2^{-k (\alpha+1)} |\zeta|^{- (\alpha+1)} \\
&\lesssim_{\alpha} c_{n, \alpha} |\zeta|^{-\alpha}. 
\end{align*}

Now we return to our initial symbol $m(\zeta)$. Putting everything together, we get that 
\begin{align*}
|m(\zeta)| \lesssim 1 + \sum_{n \geq 1}  \frac{(2 \pi )^n}{n!}  \frac{2^{bn}}{2^{bn}-1} \lesssim 1.
\end{align*}
And for $\alpha \geq 1$, we have
\begin{align*}
|\partial_{\zeta}^\alpha m(\zeta)| & \lesssim  (1+|\zeta|)^{-\alpha} +    \sum_{n \geq 1}  \frac{(2 \pi )^n}{n!} 2^{bn+\alpha} (bn+\alpha) (bn+\alpha-1) \cdot \ldots \cdot (bn)  (1+|\zeta|)^{-\alpha} \\
&\lesssim (1+|\zeta|)^{-\alpha}.
\end{align*}
This ends the proof of estimate \eqref{eq:Mikl:low:osc}.

Since our symbol $m(\zeta)$ is one-dimensional, notice that the number of derivatives (as required by Mikhlin-type multiplier results, such as Theorem \ref{JJ-mainthm}) for which the above estimates holds does not need to be that large. The desired bounds for $B_0$ in the Banach range follow from Theorem 2.2 in \cite{JJ}. More precisely, we have
\begin{equation}
\label{est:B_0}
\|B_0(f_1, f_2, f_3)\|_p \lesssim \prod_{j=1}^3 \|f_j\|_{p_j} \text{   for all   } 1 < p_1, p_2, p_3 \leq \infty, \, 1 \leq p <\infty. 
\end{equation}

\end{proof}

\section{Boundedness of $B_k$ without decay}
\label{sec:B_k:no:decay}

In this section we want to prove bounds without decay in $k$ for the $B_k$ operators. Of course this will not be enough for bounding $\sum_{k \geq 1} B_k$, but it is going to be important for proving Theorem \ref{thm:intermediary:osc:k:fixed} in the claimed range. This is because we can only prove some of the estimates with decay for $B_k$ in a smaller range, so interpolating with estimates without decay in a larger range allows us to obtain the result in Theorem \ref{thm:intermediary:osc:k:fixed} - see Section \ref{sec:finalint}.

The arguments used in this section only work in the Banach range, that is when the target space is an $L^p$ space with $p \geq 1$. Due to the ``entangled'' nature of our initial oscillatory biest operator $B$ and of the $B_k$ components, it is not yet clear how to overcome this obstacle.

We rewrite $B_{k}(f_{1},f_{2},f_{3})$ again as
\begin{equation*}
\begin{aligned}
    B_{k}(f_{1},f_{2},f_{3})(x)= \iiint \hat f_1(\xi_1) \hat f_2(\xi_2) \hat f_3(\xi_3) \sgn(\xi_1-\xi_2) \Big( \int_{\mathbb{R}} e^{2 \pi i s^b} e^{- 2 \pi i s (\xi_3-\xi_2)} \frac{1}{2^k} \rho(\frac{s}{2^k}) \mathrm{d}s  \Big) & \\
     e^{2 \pi i x (\xi_1+\xi_2+\xi_3)}\mathrm{d}\xi_{1}\mathrm{d}\xi_{2}\mathrm{d}\xi_{3}.\quad &
\end{aligned}     
\end{equation*}
After a change of variables and Fubini, this becomes
\begin{align*}
 \,  &\int_{\mathbb{R}} e^{2 \pi i 2^{kb} s^b}\rho(s) \Big( \iiint_{\R^3} \hat f_1(\xi_1) \hat f_2(\xi_2) e^{ 2 \pi i 2^k s \xi_2} \hat f_3(\xi_3) e^{ - 2 \pi i 2^k s \xi_3} \sgn(\xi_1-\xi_2)  e^{2 \pi i x (\xi_1+\xi_2+\xi_3)}\mathrm{d}\xi_{1}\mathrm{d}\xi_{2}\mathrm{d}\xi_{3}\Big)  ds \\
    =&\int_{\mathbb{R}} e^{2 \pi i 2^{kb} s^b} \rho(s) \cdot BHT (f_1, F_2^{2^k s})(x) \cdot F_3^{2^k s}(x) ds,
\end{align*}
where 
\[
F_2^{2^k s}(x):= f_2(x+ 2^k s) \quad \text{and} \quad F_3^{2^k s}(x):= f_3(x- 2^k s).
\]

Now if $p \geq 1$, we can use Minkowski's integral inequality and the fact that $\rho$ is supported on $\ds \{ s : 1/2 \leq |s| \leq 2\}$  to deduce
\begin{align*}
\big\|  B_k(f_{1},f_{2},f_{3})  \big\|_p & \lesssim  \int_{\mathbb{R}} \big| e^{2 \pi i 2^{kb} s^b} \rho(s) \big|  \cdot  \Big( \int_{\R} \big| BHT (f_1, F_2^{2^k s})(x) \cdot F_3^{2^k s}(x)\big|^p \mathrm{d} x \Big)^{1 \over p} \mathrm{d}s \\
 &\lesssim \int_{[-2, -{1 \over 2}]\cup [{1 \over 2}, 2]} \|f_1\|_{p_1} \cdot \|F_2^{2^k s}\|_{p_2} \cdot \|F_3^{2^k s}\|_{p_3} \mathrm{d}s \\
 & \lesssim \|f_1\|_{p_1} \cdot \|f_2\|_{p_2} \cdot \|f_3\|_{p_3}
\end{align*}
for all $p_1, p_2, p_3$ with $1 \leq p_1, p_2, p_3 \leq \infty$ and $\ds \frac{1}{p}= \frac{1}{p_1}+ \frac{1}{p_2} + \frac{1}{p_3} \leq 1$.

\vspace{.7em}
Next, we prove boundedness of $B_k$ with exponential decay in $k$, by examining separately each of the terms $B^{(1)}_k$, $B^{(2)}_k$ and $B^{(3)}_k$.

\section{The oscillatory part $B^{(1)}_k$}
\label{sec:B_k^1}

Fix $k \geq 1$. In this section we bound the trilinear operator $B^{(1)}_k$ defined in \eqref{eq:def:op:osc} by
\begin{align*}
B^{(1)}_k(f_1,f_2,f_3)(x)=\sum_{\ell \in \mathbb{L}_1} \iiint_{\mathbb{R}^{3}} \widehat{f_{1}}(\xi_1)\widehat{f_{2}}(\xi_2)\widehat{f_{3}}(\xi_3) \psi\big(\frac{\xi_1-\xi_2}{2^\ell}\big)\mathfrak{m}^{b}_{k}(\xi_3-\xi_2)  e^{2\pi ix(\xi_1+\xi_2+\xi_3)} \mathrm{d}\xi_1\mathrm{d}\xi_2\mathrm{d}\xi_3,
\end{align*}
where $\ds \mathbb{L}_1=\{\ell \in \Z : 2^\ell \leq 2^{k(b-1)} 2^{- N b k}\}$. As previously discussed in Section \ref{sec:initial:red:space}, the main contribution for $\mathfrak{m}^{b}_{k}(\xi_3-\xi_2)$ corresponds to the situation when $\ds |\xi_3-\xi_2|\sim 2^{k(b-1)}$, so the set of indices $\mathbb{L}_1$ represents, at least intuitively, a part of the frequency region 
\[
\{ (\xi_1, \xi_2, \xi_3)\in \R^3 : |\xi_1-\xi_2| \ll  |\xi_3-\xi_2|  \},
\] 
and so of the region $\ds \{ (\xi_1, \xi_2, \xi_3)\in \R^3 : |\xi_1-\xi_2| \ll  \big|\xi_3- \frac{\xi_1+\xi_2}{2}\big| \}$. Notice that this corresponds precisely to the decomposition used in the study of the biest operator, which was described in the beginning of Section \ref{sec:nonosc}.

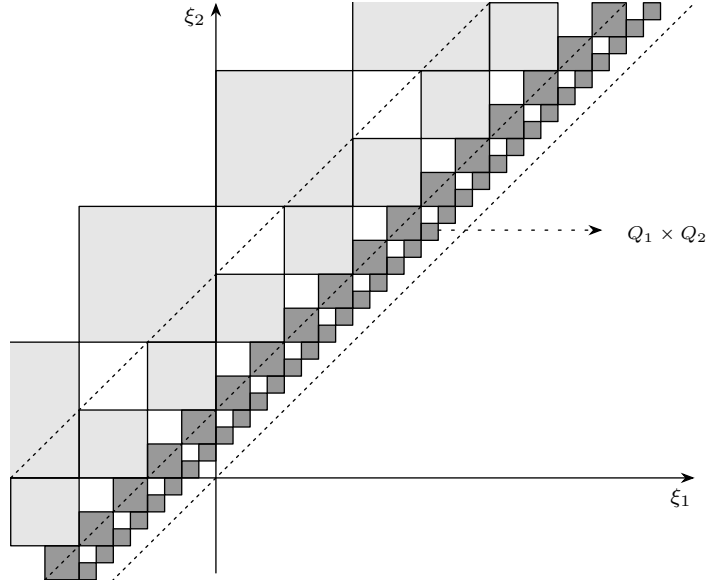
\begin{figure}[ht]
\centering
\begin{tikzpicture}[line cap=round,line join=round,>=Stealth,x=1cm,y=1cm,decoration={brace,amplitude=5pt},scale=0.9]
\clip(-4.5,-1.5) rectangle (8.5,7);

\begin{scope}
\clip(-3,-1.5) rectangle (7,15);
\filldraw[line width=0.5pt,color=black,fill=black,fill opacity=0.1] (-4,0) -- (-4,2) -- (-2,2) -- (-2,0) -- cycle;
\filldraw[line width=0.5pt,color=black,fill=black,fill opacity=0.1] (-2,2) -- (-2,4) -- (0,4) -- (0,2) -- cycle;
\filldraw[line width=0.5pt,color=black,fill=black,fill opacity=0.1] (0,4) -- (0,6) -- (2,6) -- (2,4) -- cycle;
\filldraw[line width=0.5pt,color=black,fill=black,fill opacity=0.1] (2,6) -- (2,8) -- (4,8) -- (4,6) -- cycle;
\filldraw[line width=0.5pt,color=black,fill=black,fill opacity=0.1] (4,8) -- (4,10) -- (6,10) -- (6,8) -- cycle;
\filldraw[line width=0.5pt,color=black,fill=black,fill opacity=0.1] (-3,-1) -- (-3,0) -- (-2,0) -- (-2,-1) -- cycle;
\filldraw[line width=0.5pt,color=black,fill=black,fill opacity=0.1] (-2,0) -- (-2,1) -- (-1,1) -- (-1,0) -- cycle;
\filldraw[line width=0.5pt,color=black,fill=black,fill opacity=0.1] (-1,1) -- (-1,2) -- (0,2) -- (0,1) -- cycle;

%
%

\filldraw[line width=0.5pt,color=black,fill=black,fill opacity=0.4] (1,2) -- (1,2.5) -- (1.5,2.5) -- (1.5,2) -- cycle;
\filldraw[line width=0.5pt,color=black,fill=black,fill opacity=0.4] (1.5,2.5) -- (1.5,3) -- (2,3) -- (2,2.5) -- cycle;
\filldraw[line width=0.5pt,color=black,fill=black,fill opacity=0.4] (2,3) -- (2,3.5) -- (2.5,3.5) -- (2.5,3) -- cycle;
\filldraw[line width=0.5pt,color=black,fill=black,fill opacity=0.4] (2.5,3.5) -- (2.5,4) -- (3,4) -- (3,3.5) -- cycle;
\filldraw[line width=0.5pt,color=black,fill=black,fill opacity=0.4] (1.5,2) -- (1.5,2.25) -- (1.75,2.25) -- (1.75,2) -- cycle;
\filldraw[line width=0.5pt,color=black,fill=black,fill opacity=0.4] (1.75,2.25) -- (1.75,2.5) -- (2,2.5) -- (2,2.25) -- cycle;
\filldraw[line width=0.5pt,color=black,fill=black,fill opacity=0.4] (2,2.5) -- (2,2.75) -- (2.25,2.75) -- (2.25,2.5) -- cycle;
\filldraw[line width=0.5pt,color=black,fill=black,fill opacity=0.4] (2.25,2.75) -- (2.25,3) -- (2.5,3) -- (2.5,2.75) -- cycle;
\filldraw[line width=0.5pt,color=black,fill=black,fill opacity=0.4] (2.5,3) -- (2.5,3.25) -- (2.75,3.25) -- (2.75,3) -- cycle;
\filldraw[line width=0.5pt,color=black,fill=black,fill opacity=0.4] (2.75,3.25) -- (2.75,3.5) -- (3,3.5) -- (3,3.25) -- cycle;
\filldraw[line width=0.5pt,color=black,fill=black,fill opacity=0.4] (3,3.5) -- (3,3.75) -- (3.25,3.75) -- (3.25,3.5) -- cycle;
\filldraw[line width=0.5pt,color=black,fill=black,fill opacity=0.4] (3.25,3.75) -- (3.25,4) -- (3.5,4) -- (3.5,3.75) -- cycle;
\filldraw[line width=0.5pt,color=black,fill=black,fill opacity=0.4] (-2.5,-1.5) -- (-2.5,-1) -- (-2,-1) -- (-2,-1.5) -- cycle;
\filldraw[line width=0.5pt,color=black,fill=black,fill opacity=0.4] (-2,-1) -- (-2,-0.5) -- (-1.5,-0.5) -- (-1.5,-1) -- cycle;
\filldraw[line width=0.5pt,color=black,fill=black,fill opacity=0.4] (-1.5,-0.5) -- (-1.5,0) -- (-1,0) -- (-1,-0.5) -- cycle;
\filldraw[line width=0.5pt,color=black,fill=black,fill opacity=0.4] (-1,0) -- (-1,0.5) -- (-0.5,0.5) -- (-0.5,0) -- cycle;
\filldraw[line width=0.5pt,color=black,fill=black,fill opacity=0.4] (-0.5,0.5) -- (-0.5,1) -- (0,1) -- (0,0.5) -- cycle;
\filldraw[line width=0.5pt,color=black,fill=black,fill opacity=0.4] (0,1) -- (0,1.5) -- (0.5,1.5) -- (0.5,1) -- cycle;
\filldraw[line width=0.5pt,color=black,fill=black,fill opacity=0.4] (0.5,1.5) -- (0.5,2) -- (1,2) -- (1,1.5) -- cycle;
\filldraw[line width=0.5pt,color=black,fill=black,fill opacity=0.4] (3,4) -- (3,4.5) -- (3.5,4.5) -- (3.5,4) -- cycle;
\filldraw[line width=0.5pt,color=black,fill=black,fill opacity=0.4] (3.5,4.5) -- (3.5,5) -- (4,5) -- (4,4.5) -- cycle;
\filldraw[line width=0.5pt,color=black,fill=black,fill opacity=0.4] (4,5) -- (4,5.5) -- (4.5,5.5) -- (4.5,5) -- cycle;
\filldraw[line width=0.5pt,color=black,fill=black,fill opacity=0.4] (4.5,5.5) -- (4.5,6) -- (5,6) -- (5,5.5) -- cycle;
\filldraw[line width=0.5pt,color=black,fill=black,fill opacity=0.4] (5,6) -- (5,6.5) -- (5.5,6.5) -- (5.5,6) -- cycle;
\filldraw[line width=0.5pt,color=black,fill=black,fill opacity=0.4] (5.5,6.5) -- (5.5,7) -- (6,7) -- (6,6.5) -- cycle;

\filldraw[line width=0.5pt,color=black,fill=black,fill opacity=0.4] (-2,-1.5) -- (-2,-1.25) -- (-1.75,-1.25) -- (-1.75,-1.5) -- cycle;
\filldraw[line width=0.5pt,color=black,fill=black,fill opacity=0.4] (-1.75,-1.25) -- (-1.75,-1) -- (-1.5,-1) -- (-1.5,-1.25) -- cycle;
\filldraw[line width=0.5pt,color=black,fill=black,fill opacity=0.4] (-1.5,-1) -- (-1.5,-0.75) -- (-1.25,-0.75) -- (-1.25,-1) -- cycle;
\filldraw[line width=0.5pt,color=black,fill=black,fill opacity=0.4] (-1.25,-0.75) -- (-1.25,-0.5) -- (-1,-0.5) -- (-1,-0.75) -- cycle;
\filldraw[line width=0.5pt,color=black,fill=black,fill opacity=0.4] (-1,-0.5) -- (-1,-0.25) -- (-0.75,-0.25) -- (-0.75,-0.5) -- cycle;
\filldraw[line width=0.5pt,color=black,fill=black,fill opacity=0.4] (-0.75,-0.25) -- (-0.75,0) -- (-0.5,0) -- (-0.5,-0.25) -- cycle;
\filldraw[line width=0.5pt,color=black,fill=black,fill opacity=0.4] (-0.5,0) -- (-0.5,0.25) -- (-0.25,0.25) -- (-0.25,0) -- cycle;
\filldraw[line width=0.5pt,color=black,fill=black,fill opacity=0.4] (-0.25,0.25) -- (-0.25,0.5) -- (0,0.5) -- (0,0.25) -- cycle;
\filldraw[line width=0.5pt,color=black,fill=black,fill opacity=0.4] (0,0.5) -- (0,0.75) -- (0.25,0.75) -- (0.25,0.5) -- cycle;
\filldraw[line width=0.5pt,color=black,fill=black,fill opacity=0.4] (0.25,0.75) -- (0.25,1) -- (0.5,1) -- (0.5,0.75) -- cycle;
\filldraw[line width=0.5pt,color=black,fill=black,fill opacity=0.4] (0.5,1) -- (0.5,1.25) -- (0.75,1.25) -- (0.75,1) -- cycle;
\filldraw[line width=0.5pt,color=black,fill=black,fill opacity=0.4] (0.75,1.25) -- (0.75,1.5) -- (1,1.5) -- (1,1.25) -- cycle;
\filldraw[line width=0.5pt,color=black,fill=black,fill opacity=0.4] (1,1.5) -- (1,1.75) -- (1.25,1.75) -- (1.25,1.5) -- cycle;
\filldraw[line width=0.5pt,color=black,fill=black,fill opacity=0.4] (1.25,1.75) -- (1.25,2) -- (1.5,2) -- (1.5,1.75) -- cycle;
\filldraw[line width=0.5pt,color=black,fill=black,fill opacity=0.4] (3.5,4) -- (3.5,4.25) -- (3.75,4.25) -- (3.75,4) -- cycle;
\filldraw[line width=0.5pt,color=black,fill=black,fill opacity=0.4] (3.75,4.25) -- (3.75,4.5) -- (4,4.5) -- (4,4.25) -- cycle;
\filldraw[line width=0.5pt,color=black,fill=black,fill opacity=0.4] (4,4.5) -- (4,4.75) -- (4.25,4.75) -- (4.25,4.5) -- cycle;
\filldraw[line width=0.5pt,color=black,fill=black,fill opacity=0.4] (4.25,4.75) -- (4.25,5) -- (4.5,5) -- (4.5,4.75) -- cycle;
\filldraw[line width=0.5pt,color=black,fill=black,fill opacity=0.4] (4.5,5) -- (4.5,5.25) -- (4.75,5.25) -- (4.75,5) -- cycle;
\filldraw[line width=0.5pt,color=black,fill=black,fill opacity=0.4] (4.75,5.25) -- (4.75,5.5) -- (5,5.5) -- (5,5.25) -- cycle;
\filldraw[line width=0.5pt,color=black,fill=black,fill opacity=0.4] (5,5.5) -- (5,5.75) -- (5.25,5.75) -- (5.25,5.5) -- cycle;
\filldraw[line width=0.5pt,color=black,fill=black,fill opacity=0.4] (5.25,5.75) -- (5.25,6) -- (5.5,6) -- (5.5,5.75) -- cycle;
\filldraw[line width=0.5pt,color=black,fill=black,fill opacity=0.4] (5.5,6) -- (5.5,6.25) -- (5.75,6.25) -- (5.75,6) -- cycle;
\filldraw[line width=0.5pt,color=black,fill=black,fill opacity=0.4] (5.75,6.25) -- (5.75,6.5) -- (6,6.5) -- (6,6.25) -- cycle;
\filldraw[line width=0.5pt,color=black,fill=black,fill opacity=0.4] (6,6.5) -- (6,6.75) -- (6.25,6.75) -- (6.25,6.5) -- cycle;
\filldraw[line width=0.5pt,color=black,fill=black,fill opacity=0.4] (6.25,6.75) -- (6.25,7) -- (6.5,7) -- (6.5,6.75) -- cycle;
\filldraw[line width=0.5pt,color=black,fill=black,fill opacity=0.1] (1,3) -- (1,4) -- (2,4) -- (2,3) -- cycle;
\filldraw[line width=0.5pt,color=black,fill=black,fill opacity=0.1] (0,2) -- (0,3) -- (1,3) -- (1,2) -- cycle;
\filldraw[line width=0.5pt,color=black,fill=black,fill opacity=0.1] (2,4) -- (2,5) -- (3,5) -- (3,4) -- cycle;
\filldraw[line width=0.5pt,color=black,fill=black,fill opacity=0.1] (3,5) -- (3,6) -- (4,6) -- (4,5) -- cycle;
\filldraw[line width=0.5pt,color=black,fill=black,fill opacity=0.1] (4,6) -- (4,7) -- (5,7) -- (5,6) -- cycle;

\draw [line width=0.5pt,dash pattern=on 1pt off 2pt,domain=-5:12] plot(\x,\x);
\draw [->,line width=0.5pt] (-3,0) -- (7,0);
\draw [->,line width=0.5pt] (0,-1.4) -- (0,7);
\draw [anchor=north] (6.8,0) node {\scriptsize $\xi_{1}$};
\draw [anchor= east] (0,6.8) node {\scriptsize $\xi_{2}$};
\end{scope}

\draw [line width=0.5pt,dash pattern=on 1pt off 2pt,domain=-3:6.2] plot(\x,\x+1);
\draw [line width=0.5pt,dash pattern=on 1pt off 2pt,domain=-3:7.2] plot(\x,\x+3);

\draw [->,dash pattern=on 1pt off 4pt, line width=0.5pt] (3.25,3.65) -- (5.65,3.65);
\draw [anchor=north] (6.6,3.85) node {\tiny $Q_{1}\times Q_{2}$};

\end{tikzpicture}
\captionsetup{justification=centering}
\caption{\footnotesize The highlighted cubes $Q_{1}\times Q_{2}$ have sidelength $2^\ell$ with $\ell\in\mathbb{L}_{1}$. In this case, $2^\ell$ is smaller than $2^{-k} 2^{-(N-1)bk}$.} \label{figure:X1}
\end{figure}

So now the idea is to use a Taylor expansion\footnote{While in our case it suffices to consider a first order Taylor expansion, in the situation of \cite{JJ} and \cite{M}, Taylor expansions of higher order are necessary for dealing with multilinear operators associated to product-type frequency symbols.} for $\ds \mathfrak{m}^{b}_{k}(\xi_3-\xi_2)$ in $\ds \xi_3 - \frac{\xi_1+\xi_2}{2}$. The symbol $\mathfrak{m}^{b}_{k}$ from \eqref{eq:def:symb:os} is given by an oscillatory integral expression, so taking derivatives is expected to be costly. By the Fundamental Theorem of Calculus, 
\begin{equation}
\label{eq:Taylor:1}
\begin{aligned}
\mathfrak{m}_{k}^{b}(\xi_{3}-\xi_{2}) & = \mathfrak{m}_{k}^{b}\left(\xi_{3}-\frac{\xi_{1}+\xi_{2}}{2}\right) + \int_{0}^{1}\frac{\mathrm{d}}{\mathrm{d}\tau}\mathfrak{m}_{k}^{b}\left(\xi_{3}-\frac{\xi_{1}+\xi_{2}}{2}+ \tau \big( (\xi_{3}-\xi_{2}) - (\xi_{3}-\frac{\xi_{1}+\xi_{2}}{2})   \big)\right)\mathrm{d}\tau \\
&=  \mathfrak{m}_{k}^{b}\left(\xi_{3}-\frac{\xi_{1}+\xi_{2}}{2}\right) + \int_{0}^{1} (\mathfrak{m}_{k}^{b})' \big( \xi_{3}- \frac{\xi_{1}+\xi_{2}}{2} + \tau \cdot \frac{\xi_{1}-\xi_{2}}{2}  \big)\cdot \frac{\xi_1-\xi_2}{2} \,\mathrm{d}\tau \\
&:= \mathfrak{m}_{k}^{b}\left(\xi_{3}-\frac{\xi_{1}+\xi_{2}}{2}\right) + \mathfrak{r}_{k}^{b}(\xi_1, \xi_2, \xi_3).
\end{aligned}
\end{equation} 

Replacing $\mathfrak{m}_{k}^{b}(\xi_{3}-\xi_{2})$ in our expression for $B^{(1)}_k(f_1, f_2, f_3)$, we get that
\begin{align*}
B^{(1)}_k(f_1, f_2, f_3)(x)& = \sum_{\ell \in \mathbb{L}_1} \iiint_{\mathbb{R}^{3}} \widehat{f_{1}}(\xi_1)\widehat{f_{2}}(\xi_2)\widehat{f_{3}}(\xi_3) \psi\big(\frac{\xi_1-\xi_2}{2^\ell}\big)\mathfrak{m}^{b}_{k}\big(\xi_3-\frac{\xi_{1}+\xi_{2}}{2}\big)  e^{2\pi ix(\xi_1+\xi_2+\xi_3)} \mathrm{d}\xi_1\mathrm{d}\xi_2\mathrm{d}\xi_3 \\
&+ \sum_{\ell \in \mathbb{L}_1} \iiint_{\mathbb{R}^{3}} \widehat{f_{1}}(\xi_1)\widehat{f_{2}}(\xi_2)\widehat{f_{3}}(\xi_3) \psi\big(\frac{\xi_1-\xi_2}{2^\ell}\big) \mathfrak{r}_{k}^{b}(\xi_1, \xi_2, \xi_3)  e^{2\pi ix(\xi_1+\xi_2+\xi_3)} \mathrm{d}\xi_1\mathrm{d}\xi_2\mathrm{d}\xi_3 \\
&:=\mathfrak{M}_k^{(1)}(f_1, f_2, f_3)(x)+\mathfrak{R}_k^{(1)}(f_1, f_2, f_3)(x).
\end{align*}

\subsection{The operator $\mathfrak{M}_k^{(1)}(f_1, f_2, f_3)(x)$} This corresponds to the main term of our splitting, and it can be identified with 
\begin{align}
\label{eq:id:main:1}
\tilde T_k(f_3, BHT_{\mathbb{L}_1}(f_1, f_2))(x),
\end{align}
where $\ds BHT_{\mathbb{L}_1}$ is given by 
\begin{equation}
\label{def:trunc:BHT}
 BHT_{\mathbb{L}_1}(x):=  \sum_{\ell \in \mathbb{L}_1} \iint_{\mathbb{R}^{2}} \widehat{f_{1}}(\xi_1)\widehat{f_{2}}(\xi_2) \psi\big(\frac{\xi_1-\xi_2}{2^\ell}\big) e^{2\pi ix(\xi_1+\xi_2)} \mathrm{d}\xi_1\mathrm{d}\xi_2
\end{equation}
and $\tilde T_k$ was defined in \eqref{def:T_k:decay:2}. Due to its dilation invariance, one can see after a change or variables that $BHT_{\mathbb{L}_1}$ satisfies the same $L^p$ estimates as 
\[
 BHT_{\leq 0}(x):=  \sum_{\ell' \leq 0} \iint_{\mathbb{R}^{2}} \widehat{f_{1}}(\xi_1)\widehat{f_{2}}(\xi_2) \psi\big(\frac{\xi_1-\xi_2}{2^{\ell'}}\big) e^{2\pi ix(\xi_1+\xi_2)} \mathrm{d}\xi_1\mathrm{d}\xi_2,
\]
and so $BHT_{\mathbb{L}_1}$ satisfies the same $L^p$ estimates as the bilinear Hilbert transform from Theorem \ref{thm:BHT}.

We deduce that, for any exponents $p, p_1, p_2, p_3$ such that $1<p_1, p_2, p_3 \leq \infty$, $1 \leq p < \infty$, $\ds \frac{1}{p_1}+\frac{1}{p_2}+\frac{1}{p_3}=\frac{1}{p}$, and $\ds \frac{1}{p_1}+\frac{1}{p_2}<1$, there exists $\delta=\delta(p_1,p_2, p_3) >0$ such that for all $f_1 \in L^{p_1}(\R), f_2 \in L^{p_2}(\R),  f_3 \in L^{p_3}(\R)$, and for all $k \in \N^*$, we have
\begin{align}
\label{eq:est:B^1:main}
\|\mathfrak{M}_k^{(1)}(f_1, f_2, f_3)\|_p \lesssim 2^{-\delta b k} \| f_1 \|_{p_1} \, \| f_2\|_{p_2} \, \| f_3\|_{p_3}.
\end{align}
The constraint $\ds \frac{1}{p_1}+\frac{1}{p_2}<1$ is necessary because $\ds BHT_{\mathbb{L}_1}(f_1, f_2)$ is an input for the $\tilde T_k$ operator in \eqref{eq:id:main:1}. Combining all the constraints that we have for $L^p$ estimates for $\mathfrak{M}_k^{(1)}$, we need to remove from the Banach range the endpoint $(p_1, p_1', \infty)$.

We notice that the range of boundedness of the operator $\mathfrak{M}_k^{(1)}(f_1, f_2, f_3)$ can probably be improved by making use of sizes and energies as in \cite{MTT2}, but we do not investigate this direction since we are constrained to the Banach range due to other pieces related to the operator $B_k$ - for example the remainder $\mathfrak{R}_k^{(1)}(f_1, f_2, f_3)$ below and the bound without decay that we have obtained for $B_k$ in Section \ref{sec:B_k:no:decay}.

\subsection{The remainder $\ds \mathfrak{R}_k^{(1)}(f_1, f_2, f_3)(x)$} 
Returning to the remainder operator, $\ds \mathfrak{R}_k^{(1)}(f_1, f_2, f_3)(x)$, we first need to better understand the remainder symbol $\mathfrak{r}_{k}^{b}(\xi_1, \xi_2, \xi_3)$, which, according to \eqref{eq:Taylor:1}, is equal to 
\[
\mathfrak{r}_{k}^{b}(\xi_1, \xi_2, \xi_3)= \Big( \int_{0}^{1} (\mathfrak{m}_{k}^{b})' \big( \xi_{3}- \frac{1-\tau}{2} \xi_{1} - \frac{1+\tau}{2} \xi_{2} \big) \,\mathrm{d}\tau\Big) \cdot \frac{\xi_1-\xi_2}{2}.
\]

From identity \eqref{eq:def:symb:os}, we have
\begin{align*}
(\mathfrak{m}^{b}_{k})'(\zeta) & = \frac{\mathrm{d} }{\mathrm{d} \zeta}\Big( \int_{\R}e^{-2 \pi i 2^k s \zeta} e^{2\pi i 2^{kb}s^{b}}\rho(s) \mathrm{d}s \Big) \\
&= (-2\pi i 2^k ) \Big( \int_{\R}e^{-2 \pi i 2^k s \zeta} e^{2\pi i 2^{kb}s^{b}} s\,\rho(s) \mathrm{d}s \Big).
\end{align*}

Now we replace this in our formula for $\mathfrak{R}_k^{(1)}$ to get
\begin{align*}
\mathfrak{R}_k^{(1)}(f_1, f_2, f_3)(x)&=  \sum_{\ell \in \mathbb{L}_1}  \iiint_{\mathbb R^3} \widehat{f_{1}}(\xi_1)\widehat{f_{2}}(\xi_2)\widehat{f_{3}}(\xi_3) \widehat{f_{4}}(\xi_4) \psi\big(\frac{\xi_1-\xi_2}{2^\ell}\big)  \frac{\xi_1-\xi_2}{2} e^{2 \pi i x (\xi_1+\xi_2+\xi_3)}  \\
& (-2\pi i 2^k )  \Big(  \int_{0}^{1}    \int_{\R}e^{-2 \pi i 2^k s \big( \xi_{3}- \frac{1-\tau}{2} \xi_{1} - \frac{1+\tau}{2} \xi_{2}  \big)} e^{2\pi i 2^{kb}s^{b}} s\,\rho(s) \mathrm{d}s  \mathrm{d}\tau  \Big) \mathrm{d}\xi_1\mathrm{d}\xi_2 \mathrm{d}\xi_3.
\end{align*}

Recalling that the function $\rho$ is supported on $\ds \{ s \in \R: \frac{1}{2} \leq |s| \leq 2  \}$, and after pulling out the integrals in $s$ and $\tau$, the above expression becomes
\begin{equation*}
\begin{aligned}
\mathfrak{R}_k^{(1)}(f_1, f_2, f_3)(x)=(-\pi i) \int_{0}^{1} \int_{\{ |s| \sim 1  \}} e^{2\pi i 2^{kb}s^{b}} s \,\rho(s) \Big[ \sum_{\ell \in \mathbb{L}_1} 2^{\ell+k}\iiint_{\mathbb R^3} \widehat{f_{1}}(\xi_1) e^{2 \pi i 2^k s \frac{1-\tau}{2} \xi_1} \, \widehat{f_{2}}(\xi_2) e^{2 \pi i 2^k s \frac{1+\tau}{2} \xi_2} & \\
 \widehat{f_{3}}(\xi_3) e^{-2 \pi i 2^k s \xi_{3}} \widehat{f_{3}}(\xi_3)  \psi\big(\frac{\xi_1-\xi_2}{2^\ell}\big)  \frac{\xi_1-\xi_2}{2^{\ell}} e^{2 \pi i x (\xi_1+\xi_2+\xi_3)} \mathrm{d}\xi_1\mathrm{d}\xi_2 \mathrm{d}\xi_3 \Big] \mathrm{d}s  \mathrm{d}\tau.&
\end{aligned}
\end{equation*}

For $1 \leq j \leq 3$, we let $F_j^{k,s,\tau}$ be defined by: 
\begin{align*}
\widehat{F_1^{k,s,\tau}}(\xi_1)=  \widehat{f_{1}}(\xi_1) e^{2 \pi i 2^k s \frac{1-\tau}{2} \xi_1}, \qquad \widehat{F_2^{k,s,\tau}}(\xi_2)=  \widehat{f_{2}}(\xi_2) e^{2 \pi i 2^k s \frac{1+\tau}{2} \xi_2}, \qquad \widehat {F_3^{k,s,\tau}}(\xi_3)=  \widehat{f_{3}}(\xi_3) e^{-2 \pi i 2^k s \xi_{3}},
\end{align*}
although $F_3^{k,s,\tau}$ does not really depend on $\tau$. These frequency modulations correspond to space translations, thus we have 
\[
\|F_j^{k,s,\tau}\|_{p_j}= \|f_j\|_{p_j} \qquad \text{for all  } 0<p_j\leq \infty,  1 \leq j \leq 3.
\]

For $\ell \in \mathbb L_1$, we also define $\tilde B_\ell$ to be the bilinear operator given by
\begin{align*}
\tilde B_\ell(G_1,G_2)(x):= \iint_{\R^2} \widehat{G_{1}}(\eta_1)\, \widehat{G_{2}}(\eta_2)   \psi\big(\frac{\eta_1-\eta_2}{2^\ell}\big)  \cdot \frac{\eta_1-\eta_2}{2^{\ell}} \, e^{2\pi ix(\eta_1+\eta_2)} \mathrm{d}\eta_1\mathrm{d}\eta_2,
\end{align*}
which corresponds to a fixed-scale $BHT$-type operator. It is known (see for example \cite[Chapter 6]{MS2}) that $\tilde B_\ell$ satisfies better $L^p$ estimates than the $BHT$ operator itself; more precisely, $\tilde B_\ell$ is bounded from $L^{q_1} \times L^{q_2} \to L^q$ for any $1\leq q_1, q_2\leq \infty$, $1/2 \leq q <\infty$, with bounds that are independent of the scale parameter $\ell \in \mathbb{L}_1$.

With this, we realize that 
\begin{equation*}
\begin{aligned}
\mathfrak{R}_k^{(1)}(f_1, f_2, f_3)(x)= \sum_{\ell \in \mathbb{L}_1} 2^{\ell+k} \cdot (-\pi i)  \int_{0}^{1} \int_{\{ |s| \sim 1  \}} e^{2\pi i 2^{kb}s^{b}} s \,\rho(s) \tilde B_{\ell}(F_1^{k,s,\tau}, F_2^{k,s,\tau})(x) \cdot F_3^{k,s,\tau}(x)\mathrm{d}s  \mathrm{d}\tau.&
\end{aligned}
\end{equation*}

Now if we want to estimate the $L^p$ norm of $\mathfrak{R}_k(f_1, f_2, f_3)$, for $1 \leq p <\infty$, we can invoke Minkowski's inequality and triangle inequality to obtain, via H\"older and the boundedness of $\tilde B_\ell$, that
\begin{equation*}
\begin{aligned}
\big\|\mathfrak{R}_k^{(1)}(f_1, f_2, f_3)\big\|_p \lesssim &\sum_{\ell \in \mathbb{L}_1} 2^{\ell+k} \int_{0}^{1} \int_{\{ |s| \sim 1  \}} \big\|   \tilde B_{\ell}(F_1^{k,s,\tau}, F_2^{k,s,\tau}) \big\|_{\frac{p_1 p_2}{p_1+p_2}} \cdot \| F_3^{k,s,\tau}  \|_{p_3} \mathrm{d}s  \mathrm{d}\tau \\
\lesssim & \sum_{\ell \in \mathbb{L}_1} 2^{\ell+k} \|f_1\|_{p_1} \|f_2\|_{p_2} \|f_2\|_{p_3}
\end{aligned}
\end{equation*}
as long as 
\[
0< \frac{1}{p_1}+\frac{1}{p_2}+\frac{1}{p_3}=\frac{1}{p} \leq 1, \quad 1 \leq p_1, p_2, p_3 \leq \infty.
\] 
Now if we recall that $\mathbb{L}_1=\{\ell \in \Z : 2^\ell \leq 2^{k(b-1)} 2^{- N b k}\}$, we realize that we have just proved that
\begin{align*}
\big\|\mathfrak{R}_k^{(1)}(f_1, f_2, f_3)\big\|_p \lesssim 2^{-kb (N-1)}  \|f_1\|_{p_1} \|f_2\|_{p_2} \|f_2\|_{p_3}
\end{align*} 
in the above-mentioned range. So if $N>1$, we obtain an estimate of the same flavor as \eqref{eq:est:B^1:main}.

Although the reminder term $\mathfrak{R}_k^{(1)}$ has more decay than the main term $\mathfrak{M}_k^{(1)}$, overall we get that for any exponents $1<p_1, p_2, p_3 \leq \infty$, $1 \leq p < \infty$, $1/{p_1}+1/{p_2}+1/{p_3}=1/{p}$ with $(p_1, p_2, p_3) \neq (p_1, p_1', \infty)$, there exists $\delta=\delta(p_1, p_2, p_3)$ such that
\begin{align}
\label{eq:est:B^1_k}
\|B_k^{(1)}(f_1, f_2, f_3)\|_p \lesssim 2^{-\delta b k} \| f_1 \|_{p_1} \, \| f_2\|_{p_2} \, \| f_3\|_{p_3}.
\end{align}

\section{The oscillatory part $B^{(2)}_k$}
\label{sec:B_k^2}
Here we study the operator
\begin{align*}
B^{(2)}_k(f_1, f_2, f_3)(x):=\sum_{\ell \in \mathbb{L}_1} \iiint_{\mathbb{R}^{3}} \widehat{f_{1}}(\xi_1)\widehat{f_{2}}(\xi_2)\widehat{f_{3}}(\xi_3) \psi\big(\frac{\xi_1-\xi_2}{2^\ell}\big)\mathfrak{m}^{b}_{k}(\xi_3-\xi_2)  e^{2\pi ix(\xi_1+\xi_2+\xi_3)} \mathrm{d}\xi_1\mathrm{d}\xi_2\mathrm{d}\xi_3,
\end{align*}
where $\ds \mathbb{L}_2:=\{\ell \in \Z : 2^{k(b-1)} 2^{- N b k} \leq 2^\ell \leq 2^{k(b-1)} 2^{ N b k}\}$.

We notice that the set $\ds \mathbb{L}_2$ has cardinality $2Nb k$, so if we manage to prove for every $\ell \in \mathbb{L}_2$ that the operator 
\begin{align*}
B^{(2)}_{k, \ell}(f_1, f_2, f_3)(x):=\iiint_{\mathbb{R}^{3}} \widehat{f_{1}}(\xi_1)\widehat{f_{2}}(\xi_2)\widehat{f_{3}}(\xi_3) \psi\big(\frac{\xi_1-\xi_2}{2^\ell}\big)\mathfrak{m}^{b}_{k}(\xi_3-\xi_2)  e^{2\pi ix(\xi_1+\xi_2+\xi_3)} \mathrm{d}\xi_1\mathrm{d}\xi_2\mathrm{d}\xi_3
\end{align*}
is bounded with exponential decay in $k$, then we are done. This is because the exponential decay in $k$ gains over the loss that comes from the cardinality of $\mathbb{L}_2$. We prove this result in a smaller range, but thanks to the interpolation with the estimates for $B_k$ without decay (from Section \ref{sec:B_k:no:decay}) which are available in the whole Banach range, we obtain the boundedness of $B_k$ in the range of Theorem \ref{thm:intermediary:osc:k:fixed}.

In order to prove our desired estimates, we will tensorize the symbol $\ds \psi\big(\frac{\xi_1-\xi_2}{2^\ell}\big)$. Consider a covering of the region $\ds \{ (\xi_1, \xi_2) \in \R^2 : \big| \xi_1-\xi_2 \big| \sim 2^\ell \}$ by squares having sidelength $\sim 2^\ell$, and denote by $\mathcal W_{\ell}$ this collection of squares. Next, we consider a partition of unity associated to it, consisting of bump functions compactly supported on dilates of these squares. With this, $\psi\big(\frac{\xi_1-\xi_2}{2^\ell}\big)$ becomes
\begin{align*}
\psi\big(\frac{\xi_1-\xi_2}{2^\ell}\big) = \sum_{Q_1 \times Q_2 \in \mathcal{W}_\ell} \phi_{Q_1 \times Q_2}(\xi_1, \xi_2) \psi\big(\frac{\xi_1-\xi_2}{2^\ell}\big).
\end{align*} 
Now we use Fourier series in order to tensorize the information associated to the square $Q_1 \times Q_2$ given by the term
\[
\phi_{Q_1 \times Q_2}(\xi_1, \xi_2) \psi\big(\frac{\xi_1-\xi_2}{2^\ell}\big).
\]
By means of this, $\psi\big(\frac{\xi_1-\xi_2}{2^\ell}\big)$ becomes
\[
\psi\big(\frac{\xi_1-\xi_2}{2^\ell}\big)=\sum_{Q=Q_1 \times Q_2 \in \mathcal W_{\ell}}\sum_{n\in\mathbb{Z}^{2}}C_{n}^Q \, \Phi_{Q_{1},n}(\xi_{1})\Phi_{Q_{2},n}(\xi_{2}),
\]
where the Fourier coefficients $C_{n}^Q$ are given by
\begin{align*}
C_{n}^Q=\frac{1}{|Q_1||Q_2|} \int_{\mathbb R^2} \phi_{Q_1 \times Q_2}(\eta_1, \eta_2) \psi\big(\frac{\eta_1-\eta_2}{2^\ell}\big) e^{-2 \pi i (\frac{\eta_1}{|Q_1|}, \frac{\eta_2}{|Q_2|})\cdot n} \mathrm{d} \eta_1 \mathrm{d} \eta_2. 
\end{align*}
For more details about such decompositions, see \cite[Chapter 6]{MS2}.

Since the Fourier coefficients have arbitrary polynomial decay as $|n| \to \infty$, we can regard $\psi\big(\frac{\xi_1-\xi_2}{2^\ell}\big)$ as a superposition of terms of the form
\begin{align*}
 \sum_{Q_1 \times Q_2 \in \mathcal{W}_\ell} \phi_{Q_1}(\xi_1) \, \phi_{Q_2}(\xi_2).
\end{align*}

\begin{figure}[ht]
\centering
\begin{tikzpicture}[line cap=round,line join=round,>=Stealth,x=1cm,y=1cm,decoration={brace,amplitude=5pt},scale=0.9]
\clip(-4.5,-1.5) rectangle (8.5,7);

\begin{scope}
\clip(-3,-1.5) rectangle (7,15);
\filldraw[line width=0.5pt,color=black,fill=black,fill opacity=0.1] (-4,0) -- (-4,2) -- (-2,2) -- (-2,0) -- cycle;
\filldraw[line width=0.5pt,color=black,fill=black,fill opacity=0.1] (-2,2) -- (-2,4) -- (0,4) -- (0,2) -- cycle;
\filldraw[line width=0.5pt,color=black,fill=black,fill opacity=0.1] (0,4) -- (0,6) -- (2,6) -- (2,4) -- cycle;
\filldraw[line width=0.5pt,color=black,fill=black,fill opacity=0.1] (2,6) -- (2,8) -- (4,8) -- (4,6) -- cycle;
\filldraw[line width=0.5pt,color=black,fill=black,fill opacity=0.1] (4,8) -- (4,10) -- (6,10) -- (6,8) -- cycle;

%
%
\filldraw[line width=0.5pt,color=black,fill=black,fill opacity=0.4] (-3,-1) -- (-3,0) -- (-2,0) -- (-2,-1) -- cycle;
\filldraw[line width=0.5pt,color=black,fill=black,fill opacity=0.4] (-2,0) -- (-2,1) -- (-1,1) -- (-1,0) -- cycle;
\filldraw[line width=0.5pt,color=black,fill=black,fill opacity=0.4] (-1,1) -- (-1,2) -- (0,2) -- (0,1) -- cycle;
\filldraw[line width=0.5pt,color=black,fill=black,fill opacity=0.4] (1,3) -- (1,4) -- (2,4) -- (2,3) -- cycle;
\filldraw[line width=0.5pt,color=black,fill=black,fill opacity=0.4] (0,2) -- (0,3) -- (1,3) -- (1,2) -- cycle;
\filldraw[line width=0.5pt,color=black,fill=black,fill opacity=0.4] (2,4) -- (2,5) -- (3,5) -- (3,4) -- cycle;
\filldraw[line width=0.5pt,color=black,fill=black,fill opacity=0.4] (3,5) -- (3,6) -- (4,6) -- (4,5) -- cycle;
\filldraw[line width=0.5pt,color=black,fill=black,fill opacity=0.4] (4,6) -- (4,7) -- (5,7) -- (5,6) -- cycle;

\filldraw[line width=0.5pt,color=black,fill=black,fill opacity=0.1] (1,2) -- (1,2.5) -- (1.5,2.5) -- (1.5,2) -- cycle;
\filldraw[line width=0.5pt,color=black,fill=black,fill opacity=0.1] (1.5,2.5) -- (1.5,3) -- (2,3) -- (2,2.5) -- cycle;
\filldraw[line width=0.5pt,color=black,fill=black,fill opacity=0.1] (2,3) -- (2,3.5) -- (2.5,3.5) -- (2.5,3) -- cycle;
\filldraw[line width=0.5pt,color=black,fill=black,fill opacity=0.1] (2.5,3.5) -- (2.5,4) -- (3,4) -- (3,3.5) -- cycle;
\filldraw[line width=0.5pt,color=black,fill=black,fill opacity=0.1] (1.5,2) -- (1.5,2.25) -- (1.75,2.25) -- (1.75,2) -- cycle;
\filldraw[line width=0.5pt,color=black,fill=black,fill opacity=0.1] (1.75,2.25) -- (1.75,2.5) -- (2,2.5) -- (2,2.25) -- cycle;
\filldraw[line width=0.5pt,color=black,fill=black,fill opacity=0.1] (2,2.5) -- (2,2.75) -- (2.25,2.75) -- (2.25,2.5) -- cycle;
\filldraw[line width=0.5pt,color=black,fill=black,fill opacity=0.1] (2.25,2.75) -- (2.25,3) -- (2.5,3) -- (2.5,2.75) -- cycle;
\filldraw[line width=0.5pt,color=black,fill=black,fill opacity=0.1] (2.5,3) -- (2.5,3.25) -- (2.75,3.25) -- (2.75,3) -- cycle;
\filldraw[line width=0.5pt,color=black,fill=black,fill opacity=0.1] (2.75,3.25) -- (2.75,3.5) -- (3,3.5) -- (3,3.25) -- cycle;
\filldraw[line width=0.5pt,color=black,fill=black,fill opacity=0.1] (3,3.5) -- (3,3.75) -- (3.25,3.75) -- (3.25,3.5) -- cycle;
\filldraw[line width=0.5pt,color=black,fill=black,fill opacity=0.1] (3.25,3.75) -- (3.25,4) -- (3.5,4) -- (3.5,3.75) -- cycle;
\filldraw[line width=0.5pt,color=black,fill=black,fill opacity=0.1] (-2.5,-1.5) -- (-2.5,-1) -- (-2,-1) -- (-2,-1.5) -- cycle;
\filldraw[line width=0.5pt,color=black,fill=black,fill opacity=0.1] (-2,-1) -- (-2,-0.5) -- (-1.5,-0.5) -- (-1.5,-1) -- cycle;
\filldraw[line width=0.5pt,color=black,fill=black,fill opacity=0.1] (-1.5,-0.5) -- (-1.5,0) -- (-1,0) -- (-1,-0.5) -- cycle;
\filldraw[line width=0.5pt,color=black,fill=black,fill opacity=0.1] (-1,0) -- (-1,0.5) -- (-0.5,0.5) -- (-0.5,0) -- cycle;
\filldraw[line width=0.5pt,color=black,fill=black,fill opacity=0.1] (-0.5,0.5) -- (-0.5,1) -- (0,1) -- (0,0.5) -- cycle;
\filldraw[line width=0.5pt,color=black,fill=black,fill opacity=0.1] (0,1) -- (0,1.5) -- (0.5,1.5) -- (0.5,1) -- cycle;
\filldraw[line width=0.5pt,color=black,fill=black,fill opacity=0.1] (0.5,1.5) -- (0.5,2) -- (1,2) -- (1,1.5) -- cycle;
\filldraw[line width=0.5pt,color=black,fill=black,fill opacity=0.1] (3,4) -- (3,4.5) -- (3.5,4.5) -- (3.5,4) -- cycle;
\filldraw[line width=0.5pt,color=black,fill=black,fill opacity=0.1] (3.5,4.5) -- (3.5,5) -- (4,5) -- (4,4.5) -- cycle;
\filldraw[line width=0.5pt,color=black,fill=black,fill opacity=0.1] (4,5) -- (4,5.5) -- (4.5,5.5) -- (4.5,5) -- cycle;
\filldraw[line width=0.5pt,color=black,fill=black,fill opacity=0.1] (4.5,5.5) -- (4.5,6) -- (5,6) -- (5,5.5) -- cycle;
\filldraw[line width=0.5pt,color=black,fill=black,fill opacity=0.1] (5,6) -- (5,6.5) -- (5.5,6.5) -- (5.5,6) -- cycle;
\filldraw[line width=0.5pt,color=black,fill=black,fill opacity=0.1] (5.5,6.5) -- (5.5,7) -- (6,7) -- (6,6.5) -- cycle;

\filldraw[line width=0.5pt,color=black,fill=black,fill opacity=0.1] (-2,-1.5) -- (-2,-1.25) -- (-1.75,-1.25) -- (-1.75,-1.5) -- cycle;
\filldraw[line width=0.5pt,color=black,fill=black,fill opacity=0.1] (-1.75,-1.25) -- (-1.75,-1) -- (-1.5,-1) -- (-1.5,-1.25) -- cycle;
\filldraw[line width=0.5pt,color=black,fill=black,fill opacity=0.1] (-1.5,-1) -- (-1.5,-0.75) -- (-1.25,-0.75) -- (-1.25,-1) -- cycle;
\filldraw[line width=0.5pt,color=black,fill=black,fill opacity=0.1] (-1.25,-0.75) -- (-1.25,-0.5) -- (-1,-0.5) -- (-1,-0.75) -- cycle;
\filldraw[line width=0.5pt,color=black,fill=black,fill opacity=0.1] (-1,-0.5) -- (-1,-0.25) -- (-0.75,-0.25) -- (-0.75,-0.5) -- cycle;
\filldraw[line width=0.5pt,color=black,fill=black,fill opacity=0.1] (-0.75,-0.25) -- (-0.75,0) -- (-0.5,0) -- (-0.5,-0.25) -- cycle;
\filldraw[line width=0.5pt,color=black,fill=black,fill opacity=0.1] (-0.5,0) -- (-0.5,0.25) -- (-0.25,0.25) -- (-0.25,0) -- cycle;
\filldraw[line width=0.5pt,color=black,fill=black,fill opacity=0.1] (-0.25,0.25) -- (-0.25,0.5) -- (0,0.5) -- (0,0.25) -- cycle;
\filldraw[line width=0.5pt,color=black,fill=black,fill opacity=0.1] (0,0.5) -- (0,0.75) -- (0.25,0.75) -- (0.25,0.5) -- cycle;
\filldraw[line width=0.5pt,color=black,fill=black,fill opacity=0.1] (0.25,0.75) -- (0.25,1) -- (0.5,1) -- (0.5,0.75) -- cycle;
\filldraw[line width=0.5pt,color=black,fill=black,fill opacity=0.1] (0.5,1) -- (0.5,1.25) -- (0.75,1.25) -- (0.75,1) -- cycle;
\filldraw[line width=0.5pt,color=black,fill=black,fill opacity=0.1] (0.75,1.25) -- (0.75,1.5) -- (1,1.5) -- (1,1.25) -- cycle;
\filldraw[line width=0.5pt,color=black,fill=black,fill opacity=0.1] (1,1.5) -- (1,1.75) -- (1.25,1.75) -- (1.25,1.5) -- cycle;
\filldraw[line width=0.5pt,color=black,fill=black,fill opacity=0.1] (1.25,1.75) -- (1.25,2) -- (1.5,2) -- (1.5,1.75) -- cycle;
\filldraw[line width=0.5pt,color=black,fill=black,fill opacity=0.1] (3.5,4) -- (3.5,4.25) -- (3.75,4.25) -- (3.75,4) -- cycle;
\filldraw[line width=0.5pt,color=black,fill=black,fill opacity=0.1] (3.75,4.25) -- (3.75,4.5) -- (4,4.5) -- (4,4.25) -- cycle;
\filldraw[line width=0.5pt,color=black,fill=black,fill opacity=0.1] (4,4.5) -- (4,4.75) -- (4.25,4.75) -- (4.25,4.5) -- cycle;
\filldraw[line width=0.5pt,color=black,fill=black,fill opacity=0.1] (4.25,4.75) -- (4.25,5) -- (4.5,5) -- (4.5,4.75) -- cycle;
\filldraw[line width=0.5pt,color=black,fill=black,fill opacity=0.1] (4.5,5) -- (4.5,5.25) -- (4.75,5.25) -- (4.75,5) -- cycle;
\filldraw[line width=0.5pt,color=black,fill=black,fill opacity=0.1] (4.75,5.25) -- (4.75,5.5) -- (5,5.5) -- (5,5.25) -- cycle;
\filldraw[line width=0.5pt,color=black,fill=black,fill opacity=0.1] (5,5.5) -- (5,5.75) -- (5.25,5.75) -- (5.25,5.5) -- cycle;
\filldraw[line width=0.5pt,color=black,fill=black,fill opacity=0.1] (5.25,5.75) -- (5.25,6) -- (5.5,6) -- (5.5,5.75) -- cycle;
\filldraw[line width=0.5pt,color=black,fill=black,fill opacity=0.1] (5.5,6) -- (5.5,6.25) -- (5.75,6.25) -- (5.75,6) -- cycle;
\filldraw[line width=0.5pt,color=black,fill=black,fill opacity=0.1] (5.75,6.25) -- (5.75,6.5) -- (6,6.5) -- (6,6.25) -- cycle;
\filldraw[line width=0.5pt,color=black,fill=black,fill opacity=0.1] (6,6.5) -- (6,6.75) -- (6.25,6.75) -- (6.25,6.5) -- cycle;
\filldraw[line width=0.5pt,color=black,fill=black,fill opacity=0.1] (6.25,6.75) -- (6.25,7) -- (6.5,7) -- (6.5,6.75) -- cycle;

\draw [line width=0.5pt,dash pattern=on 1pt off 2pt,domain=-5:12] plot(\x,\x);
\draw [->,line width=0.5pt] (-3,0) -- (7,0);
\draw [->,line width=0.5pt] (0,-1.4) -- (0,7);
\draw [anchor=north] (6.8,0) node {\scriptsize $\xi_{1}$};
\draw [anchor= east] (0,6.8) node {\scriptsize $\xi_{2}$};
\end{scope}

\draw [line width=0.5pt,dash pattern=on 1pt off 2pt,domain=-3:6.2] plot(\x,\x+1);
\draw [line width=0.5pt,dash pattern=on 1pt off 2pt,domain=-3:7.2] plot(\x,\x+3);

\draw [->,dash pattern=on 1pt off 4pt, line width=0.5pt] (2,3.65) -- (5.65,3.65);
\draw [anchor=north] (6.6,3.85) node {\tiny $Q_{1}\times Q_{2}$};

\end{tikzpicture}
\captionsetup{justification=centering}
\caption{\footnotesize The highlighted cubes $Q_{1} \times Q_{2}$ have sidelength $2^\ell$, with $\ell \in \mathbb{L}_{2}$, $\ell \sim 2^{k(b-1)}$}.  \label{figure:X2}
\end{figure}
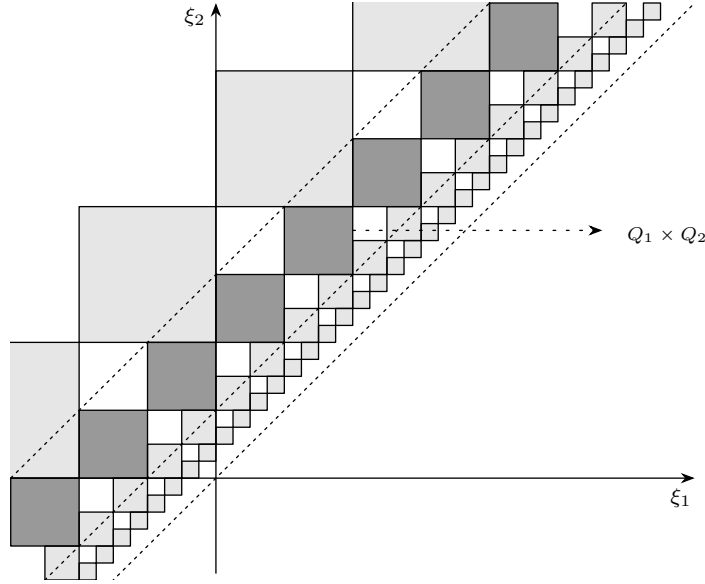

With this, our operator $B^{(2)}_{k, \ell}(f_1, f_2, f_3)$ becomes itself a superposition of terms of the form 
\begin{align*}
\tilde B^{(2)}_{k, \ell}(f_1, f_2, f_3)(x) := \sum_{Q_1 \times Q_2 \in \mathcal{W}_\ell}  \iiint_{\mathbb{R}^{3}} \widehat{f_{1}}(\xi_1)\widehat{f_{2}}(\xi_2)\widehat{f_{3}}(\xi_3)  \phi_{Q_1}(\xi_1) \, \phi_{Q_2}(\xi_2)\mathfrak{m}^{b}_{k}(\xi_3-\xi_2)  e^{2\pi ix(\xi_1+\xi_2+\xi_3)} \mathrm{d}\xi_1\mathrm{d}\xi_2\mathrm{d}\xi_3.
\end{align*}
Notice that $\mathcal{W}_\ell$ consists of square of side length comparable to $2^\ell$ that are at distance $\sim 2^\ell$ to the line $\Gamma_1$. This implies that the interval $Q_1$ is fully determined by the position of $Q_2$ and vice versa.

If $\ds \Pi_{Q_j}(f_j)$, for $j=1, 2$, denotes the smooth frequency projection of $f_j$ onto the interval $Q_j$, 
$$\ds \widehat{\Pi_{Q_j}(f_j)}(\eta)=\phi_{Q_j}(\eta)\cdot \widehat{f_{j}}(\eta),$$
then 
\begin{align*}
\tilde B^{(2)}_{k, \ell}(f_1, f_2, f_3)(x)= \sum_{Q_1 \times Q_2 \in \mathcal{W}_\ell} \Pi_{Q_1}(f_1)(x) \cdot \tilde T_k(\Pi_{Q_2}(f_2), f_3)(x),
\end{align*}
and thanks to Cauchy-Schwarz we can bound it from above by
\begin{align*}
\big| \tilde B^{(2)}_{k, \ell}(f_1, f_2, f_3)(x) \big| \leq \Big( \sum_{Q_1 \times Q_2 \in \mathcal{W}_\ell} \big| \Pi_{Q_1}(f_1)(x)\big|^2 \Big)^{\frac{1}{2}} \cdot  \Big( \sum_{Q_1 \times Q_2 \in \mathcal{W}_\ell} \big| \tilde T_k(\Pi_{Q_2}(f_2), f_3)(x)\big|^2 \Big)^\frac{1}{2}.
\end{align*}
For the first part, we can use the boundedness of the Rubio de Francia square function \cite{RF}, and for the second term we invoke $\ell^2$-valued extensions for the \emph{linear operator}
\[
f_2 \mapsto \tilde T_k(f_2, f_3),
\]
which are almost automatic thanks to Khintchine's inequality, once we know the boundedness of the $\tilde T_k$ operator (see for example \cite[Problem 8.2]{MS1}).

Thus we get
\begin{align*}
\big\| \tilde B^{(2)}_{k, \ell}(f_1, f_2, f_3)\big\| & \lesssim \big\|\big( \sum_{Q_1 \times Q_2 \in \mathcal{W}_\ell} \big| \Pi_{Q_1}(f_1)\big|^2 \big)^{\frac{1}{2}}\|_{p_1} \cdot \big\| \big( \sum_{Q_1 \times Q_2 \in \mathcal{W}_\ell} \big| \tilde T_k(\Pi_{Q_2}(f_2), f_3)\big|^2 \big)^\frac{1}{2} \|_{p_{2,3}} \\
&\lesssim 2^{-\delta b k} \|f_1\|_{p_1} \, \|f_2\|_{p_2} \, \| f_3\|_{p_3}, 
\end{align*}
provided $2 \leq p_1, p_2 < \infty$, $1<p_3 \leq \infty$ and $\ds \frac{1}{p_1}+\frac{1}{p_2}+\frac{1}{p_3}=\frac{1}{p}$.

\vspace{.1 cm }
\section{The oscillatory part $B^{(3)}_k$}
\label{sec:B_k^3}
In this section we study the operator
\begin{align*}
B^{(3)}_k(f_1,f_2,f_3)(x)=\sum_{\ell \in \mathbb{L}_3} \iiint_{\mathbb{R}^{3}} \widehat{f_{1}}(\xi_1)\widehat{f_{2}}(\xi_2)\widehat{f_{3}}(\xi_3) \psi\big(\frac{\xi_1-\xi_2}{2^\ell}\big)\mathfrak{m}^{b}_{k}(\xi_3-\xi_2)  e^{2\pi ix(\xi_1+\xi_2+\xi_3)} \mathrm{d}\xi_1\mathrm{d}\xi_2\mathrm{d}\xi_3,
\end{align*}
where $\ds \mathbb{L}_3:=\{\ell \in \Z : 2^{k(b-1)} 2^{N b k} \leq 2^\ell  \}$.

In this regime the scale of the $BHT$-type symbol dominates the scale of the $T_k$ operator, so we would like to replace (as before in Section \ref{sec:B_k^1}) the symbol $\ds \psi\big(\frac{\xi_1-\xi_2}{2^\ell}\big)$ by $\ds \psi\big(\frac{\xi_1-\frac{\xi_2+\xi_3}{2}}{2^\ell}\big)$. This allows to decouple the symbols and makes room for $BHT$ to be predominant.

\begin{figure}[ht]
\centering
\begin{tikzpicture}[line cap=round,line join=round,>=Stealth,x=1cm,y=1cm,decoration={brace,amplitude=5pt},scale=0.9]
\clip(-4.5,-1.5) rectangle (8.5,7);

%
%

\begin{scope}
\clip(-3,-1.5) rectangle (7,15);
\filldraw[line width=0.5pt,color=black,fill=black,fill opacity=0.4] (-4,0) -- (-4,2) -- (-2,2) -- (-2,0) -- cycle;
\filldraw[line width=0.5pt,color=black,fill=black,fill opacity=0.4] (-2,2) -- (-2,4) -- (0,4) -- (0,2) -- cycle;
\filldraw[line width=0.5pt,color=black,fill=black,fill opacity=0.4] (0,4) -- (0,6) -- (2,6) -- (2,4) -- cycle;
\filldraw[line width=0.5pt,color=black,fill=black,fill opacity=0.4] (2,6) -- (2,8) -- (4,8) -- (4,6) -- cycle;
\filldraw[line width=0.5pt,color=black,fill=black,fill opacity=0.4] (4,8) -- (4,10) -- (6,10) -- (6,8) -- cycle;

%
%

\filldraw[line width=0.5pt,color=black,fill=black,fill opacity=0.1] (-3,-1) -- (-3,0) -- (-2,0) -- (-2,-1) -- cycle;
\filldraw[line width=0.5pt,color=black,fill=black,fill opacity=0.1] (-2,0) -- (-2,1) -- (-1,1) -- (-1,0) -- cycle;
\filldraw[line width=0.5pt,color=black,fill=black,fill opacity=0.1] (-1,1) -- (-1,2) -- (0,2) -- (0,1) -- cycle;
\filldraw[line width=0.5pt,color=black,fill=black,fill opacity=0.1] (1,3) -- (1,4) -- (2,4) -- (2,3) -- cycle;
\filldraw[line width=0.5pt,color=black,fill=black,fill opacity=0.1] (0,2) -- (0,3) -- (1,3) -- (1,2) -- cycle;
\filldraw[line width=0.5pt,color=black,fill=black,fill opacity=0.1] (2,4) -- (2,5) -- (3,5) -- (3,4) -- cycle;
\filldraw[line width=0.5pt,color=black,fill=black,fill opacity=0.1] (3,5) -- (3,6) -- (4,6) -- (4,5) -- cycle;
\filldraw[line width=0.5pt,color=black,fill=black,fill opacity=0.1] (4,6) -- (4,7) -- (5,7) -- (5,6) -- cycle;

\filldraw[line width=0.5pt,color=black,fill=black,fill opacity=0.1] (1,2) -- (1,2.5) -- (1.5,2.5) -- (1.5,2) -- cycle;
\filldraw[line width=0.5pt,color=black,fill=black,fill opacity=0.1] (1.5,2.5) -- (1.5,3) -- (2,3) -- (2,2.5) -- cycle;
\filldraw[line width=0.5pt,color=black,fill=black,fill opacity=0.1] (2,3) -- (2,3.5) -- (2.5,3.5) -- (2.5,3) -- cycle;
\filldraw[line width=0.5pt,color=black,fill=black,fill opacity=0.1] (2.5,3.5) -- (2.5,4) -- (3,4) -- (3,3.5) -- cycle;
\filldraw[line width=0.5pt,color=black,fill=black,fill opacity=0.1] (1.5,2) -- (1.5,2.25) -- (1.75,2.25) -- (1.75,2) -- cycle;
\filldraw[line width=0.5pt,color=black,fill=black,fill opacity=0.1] (1.75,2.25) -- (1.75,2.5) -- (2,2.5) -- (2,2.25) -- cycle;
\filldraw[line width=0.5pt,color=black,fill=black,fill opacity=0.1] (2,2.5) -- (2,2.75) -- (2.25,2.75) -- (2.25,2.5) -- cycle;
\filldraw[line width=0.5pt,color=black,fill=black,fill opacity=0.1] (2.25,2.75) -- (2.25,3) -- (2.5,3) -- (2.5,2.75) -- cycle;
\filldraw[line width=0.5pt,color=black,fill=black,fill opacity=0.1] (2.5,3) -- (2.5,3.25) -- (2.75,3.25) -- (2.75,3) -- cycle;
\filldraw[line width=0.5pt,color=black,fill=black,fill opacity=0.1] (2.75,3.25) -- (2.75,3.5) -- (3,3.5) -- (3,3.25) -- cycle;
\filldraw[line width=0.5pt,color=black,fill=black,fill opacity=0.1] (3,3.5) -- (3,3.75) -- (3.25,3.75) -- (3.25,3.5) -- cycle;
\filldraw[line width=0.5pt,color=black,fill=black,fill opacity=0.1] (3.25,3.75) -- (3.25,4) -- (3.5,4) -- (3.5,3.75) -- cycle;
\filldraw[line width=0.5pt,color=black,fill=black,fill opacity=0.1] (-2.5,-1.5) -- (-2.5,-1) -- (-2,-1) -- (-2,-1.5) -- cycle;
\filldraw[line width=0.5pt,color=black,fill=black,fill opacity=0.1] (-2,-1) -- (-2,-0.5) -- (-1.5,-0.5) -- (-1.5,-1) -- cycle;
\filldraw[line width=0.5pt,color=black,fill=black,fill opacity=0.1] (-1.5,-0.5) -- (-1.5,0) -- (-1,0) -- (-1,-0.5) -- cycle;
\filldraw[line width=0.5pt,color=black,fill=black,fill opacity=0.1] (-1,0) -- (-1,0.5) -- (-0.5,0.5) -- (-0.5,0) -- cycle;
\filldraw[line width=0.5pt,color=black,fill=black,fill opacity=0.1] (-0.5,0.5) -- (-0.5,1) -- (0,1) -- (0,0.5) -- cycle;
\filldraw[line width=0.5pt,color=black,fill=black,fill opacity=0.1] (0,1) -- (0,1.5) -- (0.5,1.5) -- (0.5,1) -- cycle;
\filldraw[line width=0.5pt,color=black,fill=black,fill opacity=0.1] (0.5,1.5) -- (0.5,2) -- (1,2) -- (1,1.5) -- cycle;
\filldraw[line width=0.5pt,color=black,fill=black,fill opacity=0.1] (3,4) -- (3,4.5) -- (3.5,4.5) -- (3.5,4) -- cycle;
\filldraw[line width=0.5pt,color=black,fill=black,fill opacity=0.1] (3.5,4.5) -- (3.5,5) -- (4,5) -- (4,4.5) -- cycle;
\filldraw[line width=0.5pt,color=black,fill=black,fill opacity=0.1] (4,5) -- (4,5.5) -- (4.5,5.5) -- (4.5,5) -- cycle;
\filldraw[line width=0.5pt,color=black,fill=black,fill opacity=0.1] (4.5,5.5) -- (4.5,6) -- (5,6) -- (5,5.5) -- cycle;
\filldraw[line width=0.5pt,color=black,fill=black,fill opacity=0.1] (5,6) -- (5,6.5) -- (5.5,6.5) -- (5.5,6) -- cycle;
\filldraw[line width=0.5pt,color=black,fill=black,fill opacity=0.1] (5.5,6.5) -- (5.5,7) -- (6,7) -- (6,6.5) -- cycle;

\filldraw[line width=0.5pt,color=black,fill=black,fill opacity=0.1] (-2,-1.5) -- (-2,-1.25) -- (-1.75,-1.25) -- (-1.75,-1.5) -- cycle;
\filldraw[line width=0.5pt,color=black,fill=black,fill opacity=0.1] (-1.75,-1.25) -- (-1.75,-1) -- (-1.5,-1) -- (-1.5,-1.25) -- cycle;
\filldraw[line width=0.5pt,color=black,fill=black,fill opacity=0.1] (-1.5,-1) -- (-1.5,-0.75) -- (-1.25,-0.75) -- (-1.25,-1) -- cycle;
\filldraw[line width=0.5pt,color=black,fill=black,fill opacity=0.1] (-1.25,-0.75) -- (-1.25,-0.5) -- (-1,-0.5) -- (-1,-0.75) -- cycle;
\filldraw[line width=0.5pt,color=black,fill=black,fill opacity=0.1] (-1,-0.5) -- (-1,-0.25) -- (-0.75,-0.25) -- (-0.75,-0.5) -- cycle;
\filldraw[line width=0.5pt,color=black,fill=black,fill opacity=0.1] (-0.75,-0.25) -- (-0.75,0) -- (-0.5,0) -- (-0.5,-0.25) -- cycle;
\filldraw[line width=0.5pt,color=black,fill=black,fill opacity=0.1] (-0.5,0) -- (-0.5,0.25) -- (-0.25,0.25) -- (-0.25,0) -- cycle;
\filldraw[line width=0.5pt,color=black,fill=black,fill opacity=0.1] (-0.25,0.25) -- (-0.25,0.5) -- (0,0.5) -- (0,0.25) -- cycle;
\filldraw[line width=0.5pt,color=black,fill=black,fill opacity=0.1] (0,0.5) -- (0,0.75) -- (0.25,0.75) -- (0.25,0.5) -- cycle;
\filldraw[line width=0.5pt,color=black,fill=black,fill opacity=0.1] (0.25,0.75) -- (0.25,1) -- (0.5,1) -- (0.5,0.75) -- cycle;
\filldraw[line width=0.5pt,color=black,fill=black,fill opacity=0.1] (0.5,1) -- (0.5,1.25) -- (0.75,1.25) -- (0.75,1) -- cycle;
\filldraw[line width=0.5pt,color=black,fill=black,fill opacity=0.1] (0.75,1.25) -- (0.75,1.5) -- (1,1.5) -- (1,1.25) -- cycle;
\filldraw[line width=0.5pt,color=black,fill=black,fill opacity=0.1] (1,1.5) -- (1,1.75) -- (1.25,1.75) -- (1.25,1.5) -- cycle;
\filldraw[line width=0.5pt,color=black,fill=black,fill opacity=0.1] (1.25,1.75) -- (1.25,2) -- (1.5,2) -- (1.5,1.75) -- cycle;
\filldraw[line width=0.5pt,color=black,fill=black,fill opacity=0.1] (3.5,4) -- (3.5,4.25) -- (3.75,4.25) -- (3.75,4) -- cycle;
\filldraw[line width=0.5pt,color=black,fill=black,fill opacity=0.1] (3.75,4.25) -- (3.75,4.5) -- (4,4.5) -- (4,4.25) -- cycle;
\filldraw[line width=0.5pt,color=black,fill=black,fill opacity=0.1] (4,4.5) -- (4,4.75) -- (4.25,4.75) -- (4.25,4.5) -- cycle;
\filldraw[line width=0.5pt,color=black,fill=black,fill opacity=0.1] (4.25,4.75) -- (4.25,5) -- (4.5,5) -- (4.5,4.75) -- cycle;
\filldraw[line width=0.5pt,color=black,fill=black,fill opacity=0.1] (4.5,5) -- (4.5,5.25) -- (4.75,5.25) -- (4.75,5) -- cycle;
\filldraw[line width=0.5pt,color=black,fill=black,fill opacity=0.1] (4.75,5.25) -- (4.75,5.5) -- (5,5.5) -- (5,5.25) -- cycle;
\filldraw[line width=0.5pt,color=black,fill=black,fill opacity=0.1] (5,5.5) -- (5,5.75) -- (5.25,5.75) -- (5.25,5.5) -- cycle;
\filldraw[line width=0.5pt,color=black,fill=black,fill opacity=0.1] (5.25,5.75) -- (5.25,6) -- (5.5,6) -- (5.5,5.75) -- cycle;
\filldraw[line width=0.5pt,color=black,fill=black,fill opacity=0.1] (5.5,6) -- (5.5,6.25) -- (5.75,6.25) -- (5.75,6) -- cycle;
\filldraw[line width=0.5pt,color=black,fill=black,fill opacity=0.1] (5.75,6.25) -- (5.75,6.5) -- (6,6.5) -- (6,6.25) -- cycle;
\filldraw[line width=0.5pt,color=black,fill=black,fill opacity=0.1] (6,6.5) -- (6,6.75) -- (6.25,6.75) -- (6.25,6.5) -- cycle;
\filldraw[line width=0.5pt,color=black,fill=black,fill opacity=0.1] (6.25,6.75) -- (6.25,7) -- (6.5,7) -- (6.5,6.75) -- cycle;

\draw [line width=0.5pt,dash pattern=on 1pt off 2pt,domain=-5:12] plot(\x,\x);
\draw [->,line width=0.5pt] (-3,0) -- (7,0);
\draw [->,line width=0.5pt] (0,-1.4) -- (0,7);
\draw [anchor=north] (6.8,0) node {\scriptsize $\xi_{1}$};
\draw [anchor= east] (0,6.8) node {\scriptsize $\xi_{2}$};
\end{scope}

\draw [line width=0.5pt,dash pattern=on 1pt off 2pt,domain=-3:6.2] plot(\x,\x+1);
\draw [line width=0.5pt,dash pattern=on 1pt off 2pt,domain=-3:7.2] plot(\x,\x+3);

\draw [->,dash pattern=on 1pt off 4pt, line width=0.5pt] (0,3.65) -- (5.65,3.65);
\draw [anchor=north] (6.6,3.85) node {\tiny $Q_{1}\times Q_{2}$};

\end{tikzpicture}
\captionsetup{justification=centering}
\caption{\footnotesize The highlighted cubes $Q_{1}\times Q_{2}$ have sidelength $2^\ell \geq 2^{-k} 2^{(N+1)bk}$, with $ \ell \in \mathbb{L}_{3}$.} \label{figure:X3}
\end{figure}
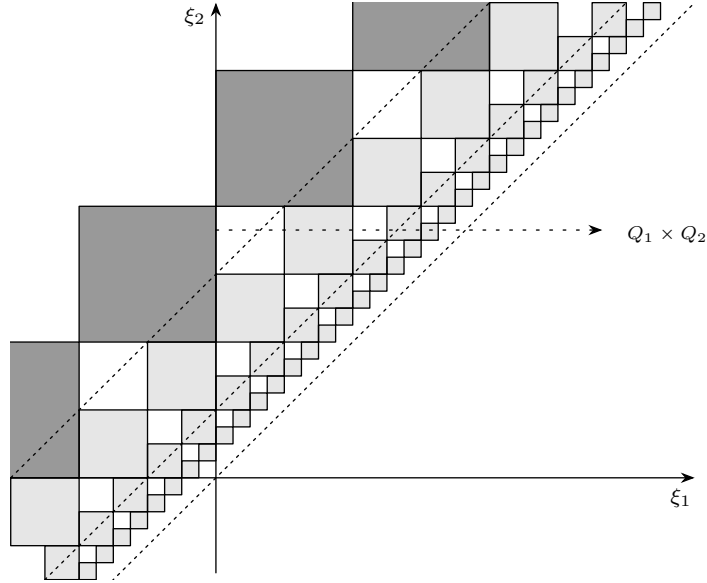

By the Fundamental Theorem of Calculus, 
\begin{align*}
\psi_\ell(\xi_1-\xi_2)= \psi_\ell\big(\xi_1-\frac{\xi_2+\xi_3}{2}\big) +\int_0^1 \tilde \psi_{\ell} \big(  \xi_1  -\frac{\xi_2+\xi_3}{2} +\tau \cdot \frac{\xi_3-\xi_2}{2} \big) \, 2^{-\ell} \cdot \frac{\xi_3-\xi_2}{2} \mathrm{d}\tau, 
\end{align*}
where 
\begin{align*}
\tilde \psi_{\ell}(\zeta):= \psi' \big( \frac{\zeta}{2^\ell}\big)
\end{align*}
has the same properties as $\tilde \psi_{\ell}$ functions.

With this, $B^{(3)}_k$ splits into two parts called $\mathfrak{M}^{(3)}_k$ and $\mathfrak{R}^{(3)}_k$, respectively.

\subsection{The operator $\mathfrak{M}^{(3)}_k$} The main part is given by
\begin{align*}
\mathfrak{M}^{(3)}_k(f_1,f_2,f_3)(x)=\sum_{\ell \in \mathbb{L}_3} \iiint_{\mathbb{R}^{3}} \widehat{f_{1}}(\xi_1)\widehat{f_{2}}(\xi_2)\widehat{f_{3}}(\xi_3) \psi_\ell\big(\xi_1-\frac{\xi_2+\xi_3}{2}\big) \mathfrak{m}^{b}_{k}(\xi_3-\xi_2)  e^{2\pi ix(\xi_1+\xi_2+\xi_3)} \mathrm{d}\xi_1\mathrm{d}\xi_2\mathrm{d}\xi_3.
\end{align*}
We recognize this as being
\begin{align*}
BHT_{\mathbb{L}_3}(f_1, T_k (f_2, f_3))(x),
\end{align*}
where $T_k$ was defined in \eqref{def:T_k:decay} and
\begin{align*}
BHT_{\mathbb{L}_3}(F_1, F_2)(x):= \sum_{\ell \in \mathbb{L}_3} \iint_{\mathbb{R}^{2}} \widehat{F_{1}}(\eta_1)\widehat{F_{2}}(\eta_2) \psi_\ell\big(\eta_1-\frac{\eta_2}{2}\big) e^{2\pi ix(\eta_1+\eta_2)} \mathrm{d}\eta_1\mathrm{d}\eta_2.
\end{align*}
Again due to its dilation invariance, $BHT_{\mathbb{L}_3}$ satisfies the same type of $L^p$ estimates as
\begin{align*}
BHT_{\geq 0}(F_1, F_2)(x):= \sum_{\ell \geq 0} \iint_{\mathbb{R}^{2}} \widehat{F_{1}}(\eta_1)\widehat{F_{2}}(\eta_2) \psi_\ell\big(\eta_1-\frac{\eta_2}{2}\big) e^{2\pi ix(\eta_1+\eta_2)} \mathrm{d}\eta_1\mathrm{d}\eta_2,
\end{align*}
uniformly\footnote{The $k \in \N^*$ parameter appears implicitly in the definition of $\mathbb L_3$, but the bound for $BHT_{\mathbb{L}_3}$ are independent of it.} in $k \in \N^{*}$.

Thanks to the boundedness of $BHT_{\mathbb{L}_3}$ and $T_k$ from Theorem \ref{thm:conseq:BCa},  we have that 
\begin{align*}
\big\|\mathfrak{M}^{(3)}_k (f_1, f_2, f_3)  \big\|_p \lesssim 2^{-\delta b k}  \|f_1\|_{p_1} \, \|f_2\|_{p_2} \, \|f_3\|_{p_3},
\end{align*}
as long as $\ds 1<p_1, p_2, p_3 \leq \infty$, $1 \leq p < \infty$, $\frac{1}{p_1}+\frac{1}{p_2}+\frac{1}{p_3}=\frac{1}{p}$ and $\frac{1}{p_2}+\frac{1}{p_3}<1$. The last constraint comes from the fact that $T_k (f_2, f_3)$ is an input for the bilinear operator $BHT_{\mathbb{L}_3}$, but given the other constraints this means simply that $(p_1, p_2, p_3) \neq (\infty, p_2, p_2')$.

\subsection{The remainder $\ds \mathfrak{R}^{(3)}_k$} The residual term can be written as
\begin{equation*}
    \eqalign{
    \displaystyle\mathfrak{R}^{(3)}_k(f_1,f_2,f_3)(x) &=\displaystyle \int_0^1 \sum_{\ell \in \mathbb{L}_3} 2^{-\ell} \iiint_{\mathbb{R}^{3}} \widehat{f_{1}}(\xi_1)\widehat{f_{2}}(\xi_2)\widehat{f_{3}}(\xi_3) \tilde \psi_{\ell} \big(  \xi_1  -\frac{\xi_2+\xi_3}{2} +\tau \cdot \frac{\xi_3-\xi_2}{2} \big)  \cdot (\xi_3-\xi_2) \cr
    &\qquad\qquad\qquad\qquad\qquad\qquad\qquad\times \,\mathfrak{m}^{b}_{k}(\xi_3-\xi_2)  e^{2\pi ix(\xi_1+\xi_2+\xi_3)} \mathrm{d}\xi_1\mathrm{d}\xi_2\mathrm{d}\xi_3 \mathrm{d} \tau \cr
    }
\end{equation*}

We don't have enough frequency information on $\xi_3-\xi_2$. At this point, we could use the asymptotic expansion for the oscillatory integral in order to obtain a more precise localization of the functions - in particular, up to error terms, we can assume that $(\xi_2, \xi_3)$ belongs to the strip $\ds \{ (\xi_2, \xi_3) : |\xi_3 - \xi_2| \sim 2^{k(b-1)}\}$.

We can avoid this extra decomposition by reworking the symbol $\ds  (\xi_3-\xi_2) \,\mathfrak{m}^{b}_{k}(\xi_3-\xi_2)$, which is given by
\begin{align*}
\int_{\R} (\xi_3-\xi_2) \, e^{-2 \pi i 2^k s (\xi_3-\xi_2)}  e^{2\pi i 2^{kb}s^{b}}\rho(s) \mathrm{d}s.
\end{align*}
After integrating by parts, this becomes
\begin{align*}
\int_{\R}  e^{-2 \pi i 2^k s (\xi_3-\xi_2)}  e^{2\pi i 2^{kb}s^{b}} \big( b \, 2^{k(b-1)} s^{b-1} \rho(s) +\frac{1}{2\pi i} 2^{-k} \rho'(s)  \big) \mathrm{d}s:= \mathfrak{r}^{b}_{1,k}(\xi_3-\xi_2)+\mathfrak{r}^{b}_{2,k}(\xi_3-\xi_2),
\end{align*}
where 
\begin{align*}
\mathfrak{r}^{b}_{1,k}(\xi_3-\xi_2):=\int_{\R}  e^{-2 \pi i 2^k s (\xi_3-\xi_2)}  e^{2\pi i 2^{kb}s^{b}}  b \, 2^{k(b-1)} s^{b-1} \rho(s) \mathrm{d}s
\end{align*}
and
\begin{align*}
\mathfrak{r}^{b}_{2,k}(\xi_3-\xi_2):=\int_{\R}  e^{-2 \pi i 2^k s (\xi_3-\xi_2)}  e^{2\pi i 2^{kb}s^{b}} \frac{1}{2\pi i} 2^{-k} \rho'(s) \mathrm{d}s.
\end{align*}
We can see that both symbols resemble $\mathfrak{m}^{b}_{k}(\xi_3-\xi_2)$, although the term $\mathfrak{r}^{b}_{1,k}(\xi_3-\xi_2)$ behaves worse due to the derivative that hit the highly oscillating complex exponential.

We then further split $\mathfrak{R}^{(3)}_k$ as $\ds \mathfrak{R}^{(3)}_k = \mathfrak{R}^{(3)}_{1,k}+\mathfrak{R}^{(3)}_{2, k}$, where $\mathfrak{R}^{(3)}_{j,k}$ corresponds to the symbol $\mathfrak{r}^{b}_{j,k}$ for $\ds j \in \{1,2\}$. As before, we pull out the integrals in $s$ and $\tau$ to obtain 
\begin{align*}
\mathfrak{R}^{(3)}_{1,k}(f_1, f_2, f_3)(x)=\int_0^1 \int_{[{1 \over 2}, 2]} e^{2\pi i 2^{kb}s^{b}} s^{b-1} \rho(s) \sum_{\ell \in \mathbb{L}_3} 2^{-\ell} 2^{k(b-1)} \iiint_{\mathbb{R}^{3}} \widehat{f_{1}}(\xi_1) \widehat{f_{2}}(\xi_2)  e^{2 \pi i 2^k s \xi_2} \widehat{f_{3}}(\xi_3) e^{-2 \pi i 2^k s \xi_3} & \\
 \tilde \psi_{\ell} \big(  \xi_1  -\frac{1+\tau}{2} \xi_2 - \frac{1-\tau}{2} \xi_3 \big) e^{2\pi ix(\xi_1+\xi_2+\xi_3)}  \mathrm{d}\xi_1\mathrm{d}\xi_2\mathrm{d}\xi_3 \mathrm{d}s \mathrm{d} \tau.\quad &
\end{align*}
Already we see that we have decay with respect to $\ell \in  \mathbb{L}_3$, and since $2^\ell \geq 2^{k(b-1)} 2^{N b k}$, we have overall decay in $2^{k}$. So it is enough to bound without decay, uniformly in $\tau, s, \ell$ the trilinear operator
\begin{align}
\label{eq:tril:H}
(f_1, f_2, f_3) \mapsto \iiint_{\mathbb{R}^{3}} \widehat{f_{1}}(\xi_1) \widehat{f_{2}}(\xi_2)  e^{2 \pi i 2^k s \xi_2} \widehat{f_{3}}(\xi_3) e^{-2 \pi i 2^k s \xi_3} \tilde \psi_{\ell} \big(  \xi_1  -\frac{1+\tau}{2} \xi_2 - \frac{1-\tau}{2} \xi_3 \big) e^{2\pi ix(\xi_1+\xi_2+\xi_3)}  \mathrm{d}\xi_1\mathrm{d}\xi_2\mathrm{d}\xi_3.
\end{align}
This corresponds to a one-scale version of the trilinear Hilbert transform. Although the boundedness of the trilinear Hilbert transform remains a widely open question, and although it is known that the associated one-scale operator cannot be bounded when all the input functions are close to $L^1$ (see \cite{d}, \cite{MS2}), the above operator can be bounded.

Indeed, let $\alpha_1, \alpha_2, \alpha_3 \in \R$ and let $\Phi$ be an $L^1$-normalized function. Define  
\begin{align*}
H_{\alpha_1, \alpha_2, \alpha_3}(F_1, F_2, F_3)(x)= \int_{\R} F_1(x+\alpha_1 \, t) \, F_2(x+\alpha_2 \, t) \, F_3(x+\alpha_3 \, t) \, \Phi(t) \mathrm{d} t.
\end{align*}
For $1<q_1, q_2, q_3 \leq \infty $ such that $\ds \frac{1}{q_1}+\frac{1}{q_2}+\frac{1}{q_3}=1$, we have the pointwise inequality
\begin{align*}
\big| H_{\alpha_1, \alpha_2, \alpha_3}(F_1, F_2, F_3)(x)\big| \lesssim \prod_{j=1}^3 \Big(  \int_{\R} \big(|F_j(x+\alpha_j \, t)| \, | \Phi(t)|^\frac{1}{q_j}\big)^{q_j}  \mathrm{d} t \Big)^{\frac{1}{q_j}}.
\end{align*}
This implies for any $1<p_1, p_2, p_3 \leq \infty$, $1 \leq p <\infty$ such that $\ds \frac{1}{p_1}+\frac{1}{p_2}+\frac{1}{p_3}=\frac{1}{p}$, and for any $q_1, q_2, q_3$ as above, that
\begin{align*}
\|H_{\alpha_1, \alpha_2, \alpha_3}(F_1, F_2, F_3)\|_p \lesssim \prod_{j=1}^3 \Big( \Big(  \int_{\R} \big(|F_j(x+\alpha_j \, t)| \, | \Phi(t)|^\frac{1}{q_j}\big)^{q_j}  \mathrm{d} t \Big)^{\frac{p_j}{q_j}}      \mathrm{d} x \Big)^{\frac{1}{p_j}}.
\end{align*}

Now we choose $q_1, q_2, q_3$ so that $1 \leq q_j \leq p_j$ for all $1 \leq j \leq 3$, which is possible since
\[
 \frac{1}{p_1}+\frac{1}{p_2}+\frac{1}{p_3}=\frac{1}{p} \leq 1=\frac{1}{q_1}+\frac{1}{q_2}+\frac{1}{q_3}.
\]
This allows us to apply Minkowski's integral inequality and switch the order of integration in $x$ and $t$:
\begin{align*}
\|H_{\alpha_1, \alpha_2, \alpha_3}(F_1, F_2, F_3)\|_p & \lesssim \prod_{j=1}^3 \Big( \Big(  \int_{\R} \big(|F_j(x+\alpha_j \, t)| \, | \Phi(t)|^\frac{1}{q_j}\big)^{p_j}  \mathrm{d}x \Big)^{\frac{q_j}{p_j}}      \mathrm{d}t \Big)^{\frac{1}{q_j}} \\
&\lesssim \big( \prod_{j=1}^3 \|F_j\|_{p_j} \big) \|\Phi\|_1.
\end{align*}
Notice that these bounds are \emph{uniform} in $\alpha_1, \alpha_2, \alpha_3 \in \R$.

Now we apply these estimates for $H_{\alpha_1, \alpha_2, \alpha_3}$ to the functions
\[
F_1(x):=f_1(x), \qquad F_2(x):= f_2(x+ 2^ks), \qquad F_3(x):= f_3(x- 2^ks),
\]
for $\ds (\alpha_1, \alpha_2, \alpha_3)= (-1, \frac{1+\tau}{2},  \frac{1-\tau}{2})$, and with the $L^1$ normalized function $\ds \check{\tilde \psi}_{\ell}$.
This yields that the operator in \eqref{eq:tril:H} is bounded, uniformly in $\ell, k, s$ and $\tau$, for any $1<p_1, p_2, p_3 \leq \infty$, $1 \leq p <\infty$ such that $\ds \frac{1}{p_1}+\frac{1}{p_2}+\frac{1}{p_3}=\frac{1}{p}$. Thus we get, within the same range, that
\begin{align*}
\|\mathfrak{R}^{(3)}_{1,k}(f_1, f_2, f_3)\|_p & \lesssim  \sum_{\ell \in \mathbb{L}_3} 2^{-\ell} 2^{k(b-1)} \|f_1\|_{p_1} \, \|f_2\|_{p_2} \, \|f_3\|_{p_3} \\
&\lesssim 2^{-N bk}  \|f_1\|_{p_1} \, \|f_2\|_{p_2} \, \|f_3\|_{p_3}.
\end{align*}

In a similar way, one can prove the same type of $L^p$ estimates for $\mathfrak{R}^{(3)}_{2,k}(f_1, f_2, f_3)$. Because the symbol $\mathfrak{r}^{b}_{2,k}$ behaves better that $\mathfrak{r}^{b}_{1,k}$, we have even more decay in $2^k$.

\section{Final interpolation}\label{sec:finalint}

Now we recall all the estimates we have proved for each piece of our operator $B$ obtained after decomposing it in Section \ref{sec:initial:red:space}. Recall that
\begin{align*}
B(f_1, f_2, f_3)(x) &=B_0(f_1, f_2, f_3)+\sum_{k \geq 1} B_k^{(1)}(f_1, f_2, f_3)(x)+B_k^{(2)}(f_1, f_2, f_3)(x) +B_k^{(3)}(f_1, f_2, f_3)(x) \\
&=B_0(f_1, f_2, f_3)+\sum_{k \geq 1} \mathfrak{M}_k^{(1)}(f_1, f_2, f_3)(x)+\mathfrak{R}_k^{(1)}(f_1, f_2, f_3)(x) \\
+&\sum_{k \geq 1} \sum_{\ell \in \mathbb{L}_2} B_{k, \ell}^{(2)}(f_1, f_2, f_3)(x) + \sum_{k \geq 1} \mathfrak{M}_k^{(3)}(f_1, f_2, f_3)(x)+\mathfrak{R}_k^{(3)}(f_1, f_2, f_3)(x).
\end{align*}

The chart below collects all conditions and constraints that we currently have for the $L^p$ boundedness of each of the components of $B$. We study their $\ds L^{p_1} \times L^{p_2} \times L^{p_3} \to L^{p}$ boundedness under the H\"older condition
\[
\frac{1}{p_1}+\frac{1}{p_2}+\frac{1}{p_3}=\frac{1}{p},
\]
with $1<p_1, p_2, p_3 \leq \infty$, $1/3 < p < \infty$. As previously mentioned, we are restricted to the Banach range, hence $1\leq p \leq \infty$.

\begin{center}
\begin{tabularx}{0.99\textwidth} { 
  | >{\raggedright\arraybackslash}X 
  | >{\centering\arraybackslash}X 
  | >{\raggedleft\arraybackslash}X | }
 \hline
 Boundedness of $B_0$ & $\ds \|B_0(f_1, f_2, f_3)\|_p \lesssim \prod_{j=1}^3 \|f_j\|_{p_j}$ for all $1 < p_1, p_2, p_3 \leq \infty$, $1 \leq p <\infty$.  \\
 \hline
  Boundedness of $B_k$ without decay  & $\ds \|B_k(f_1, f_2, f_3)\|_p \lesssim \prod_{j=1}^3 \|f_j\|_{p_j}$ for all $1 < p_1, p_2, p_3 \leq \infty$, $1 \leq p <\infty$.   \\
 \hline
Boundedness of $\mathfrak{M}_k^{(1)}$ with decay  & $\ds \|\mathfrak{M}_k^{(1)}(f_1, f_2, f_3)\|_p \lesssim 2^{-\delta b k} \prod_{j=1}^3 \|f_j\|_{p_j}$ for all $1 < p_1, p_2, p_3 \leq \infty$, $1 \leq p <\infty$ but we cannot have $(p_1, p_2, p_3)=(p_1, p_1', \infty)$.  \\
  \hline
Boundedness of $\mathfrak{R}_k^{(1)}$ with decay  & $\ds \big\|\mathfrak{R}_k^{(1)}(f_1, f_2, f_3)\big\|_p \lesssim 2^{-kb (N-1)} \prod_{j=1}^3 \|f_j\|_{p_j}$ for all  $1 < p_1, p_2, p_3 \leq \infty$, $1 \leq p <\infty$. \\
  \hline
For every $\ell \in \mathbb{L}_2=\{ \ell \in \Z : k(b-1)- N b k \leq \ell \leq k(b-1)  N b k \}$, boundedness of $B_{k, \ell}^{(2)}$ with decay  & $\ds \|B_{k, \ell}^{(2)}(f_1, f_2, f_3)\|_p \lesssim 2^{-\delta b k} \prod_{j=1}^3 \|f_j\|_{p_j}$ for all  $ 2 \leq p_1, p_2<\infty, 1< p_3 \leq \infty$, $1 \leq p <\infty$.  \\
  \hline
The above implies the boundedness of $B_k^{(2)}$ with decay  & $\ds \|B_k^{(2)}(f_1, f_2, f_3)\|_p \lesssim 2Nbk 2^{-\delta b k} \prod_{j=1}^3 \|f_j\|_{p_j}$ for all $ 2 \leq p_1, p_2<\infty, 1< p_3 \leq \infty$, $1 \leq p <\infty$.  \\  
\hline
Boundedness of $\mathfrak{M}_k^{(3)}$ with decay  & $\ds \|\mathfrak{M}_k^{(3)}(f_1, f_2, f_3)\|_p \lesssim 2^{-\delta b k} \prod_{j=1}^3 \|f_j\|_{p_j}$ for all $1 < p_1, p_2, p_3 \leq \infty$, $1 \leq p <\infty$ but we cannot have $(p_1, p_2, p_3)=(\infty, p_2, p_2')$.  \\
  \hline 
Boundedness of $\mathfrak{R}_k^{(3)}$ with decay  & $\ds \big\|\mathfrak{R}_k^{(3)}(f_1, f_2, f_3)\big\|_p \lesssim 2^{-kb N} \prod_{j=1}^3 \|f_j\|_{p_j}$ for all $ 1 < p_1, p_2, p_3 \leq \infty$, $1 \leq p <\infty$.  \\
  \hline 
\end{tabularx}
\end{center}
We recall that the parameter $\delta>0$ appearing in the boundedness of $\ds \mathfrak{M}_k^{(1)}, B_{k, \ell}^{(2)}$ (for $\ell \in \mathbb{L}_2$) and $\mathfrak{M}_k^{(3)}$ comes from Theorem \ref{thm:conseq:BCa}. We lose a bit of this decay (it is a logarithmic loss) since we need to take into account the cardinality of the set $\mathbb{L}_2$. For the terms $\mathfrak{R}_k^{(1)}$ and $\mathfrak{R}_k^{(3)}$, the decay is given by $\ds 2^{-kb(N-1)}$, so if we choose $N=2$, we get 
\begin{align}
\label{eq:decay:B_k:v:local}
\|B_k(f_1, f_2, f_3)\|_p \lesssim k 2^{-\delta b k} \prod_{j=1}^3 \|f_j\|_{p_j}
\end{align}
for all $ 2 \leq p_1, p_2<\infty, 1< p_3 \leq \infty$, $1 \leq p <\infty$.

We interpolate this with the estimates for $B_k$ without decay that were shown in Section \ref{sec:B_k:no:decay}, to deduce the result in Theorem \ref{thm:intermediary:osc:k:fixed}. In other words, we propagate via interpolation the decay from \eqref{eq:decay:B_k:v:local} to the whole Banach range (as described in Theorem \ref{thm:intermediary:osc:k:fixed}).

Now we can finally sum up the $B_k$ terms, and invoke the boundedness of $B_0$, to obtain the result in Theorem \ref{mainthm}.




\Addresses

\end{document}